\newcommand\R{{\mathbb{R}}}
\newcommand\C{{\mathbb{C}}}
\renewcommand\P{{\mathbb{P}}}
\newcommand\E{{\mathbb{E}}}
\newcommand\Var{\mathbf{Var}}
\newcommand\eps{{\varepsilon}}
\newcommand\dist{\operatorname{dist}}
\theoremstyle{plain}
 \newtheorem{theorem}{Theorem}[section]
 \newtheorem{conjecture}[theorem]{Conjecture}
 \newtheorem{problem}[theorem]{Problem}
 \newtheorem{proposition}[theorem]{Proposition}
 \newtheorem{lemma}[theorem]{Lemma}
\newtheorem{remark}[theorem]{Remark}
\theoremstyle{definition}
\newtheorem{definition}[theorem]{Definition}
\title{Complex Random Matrices have no Real Eigenvalues}
\author{Kyle Luh \thanks{Department of Mathematics, Yale University. Email:
		kyle.luh@yale.edu and Harvard University.  Email: kluh@seas.harvard.edu}
			}
\date{}
\begin{document}
\maketitle
\begin{abstract}
	Let $\zeta = \xi + i\xi'$ where $\xi, \xi'$ are iid copies of a mean zero, variance one, subgaussian random variable.  Let $N_n$ be a $n \times n$ random matrix with entries that are iid copies of $\zeta$.  We prove that there exists a $c \in (0,1)$ such that the probability that $N_n$ has any real eigenvalues is less than $c^n$ where $c$ only depends on the subgaussian moment of $\xi$.  The bound is optimal up to the value of the constant $c$.  The principal component of the proof is an optimal tail bound on the least singular value of matrices of the form $M_n := M + N_n$ where $M$ is a deterministic complex matrix with the condition that $\|M\| \leq K n^{1/2}$ for some constant $K$ depending on the subgaussian moment of $\xi$.  For this class of random variables, this result improves on the results of Pan-Zhou \cite{pan2010circular} and Rudelson-Vershynin \cite{rudelson2008littlewood}.  In the proof of the tail bound, we develop an optimal small-ball probability bound for complex random variables that generalizes the Littlewood-Offord theory developed by Tao-Vu (\cite{tao2009littlewood,tao2009inverse}) and Rudelson-Vershynin (\cite{rudelson2008littlewood,rudelson2009smallest}).   
\end{abstract}
\section{Introduction}
The study of eigenvalues is a foundational aspect of random matrix theory.  For non-symmetric random matrices, real eigenvalues are of particular interest  (\cite{girko2012theory,edelman1994many,edelman1997probability,lehmann1991eigenvalue,hameed2015real,forrester2016real}).   However, very little is known about their behavior for general classes of random variables \cite[Ch. 15.3]{mehta2004random}.  Before a recent universality result of Tao and Vu \cite{taovuuniversality}, even the existence of real eigenvalues was only known for the real gaussian case, where it has been proven that there are roughly $\sqrt{\frac{2n}{\pi}} $ real eigenvalues \cite{edelman1994many}.  Tao and Vu extended this result to all random variables that match the real gaussian up to the first four moments \cite{taovuuniversality}.  Yet, even this result does not shed light on many natural distributions (e.g. Rademacher $\pm 1$).  We demonstrate that for a general class of complex random variables, whose real and imaginary components are independent, there are unlikely to be any real eigenvalues.  

A key element of the proof, which is of independent interest, is an optimal result on the tail probability of the least singular value of random complex matrices.  Let $M$ be a $n \times n$ matrix and $s_1(M) \geq \dots \geq s_n(M)$ its singular values.  Of great interest to numerical analysts is the condition number of the matrix $M$, defined to be 
$$
\kappa(M) := s_1(M)/s_n(M) = \|M\|  \|M^{-1}\|.
$$
Attempts to understand the typical behavior of this parameter were instigated by von Neumann and Goldstine \cite{von1947numerical} in their seminal work on numerical matrix inversion.  The condition number is also intimately tied to the efficiency of algorithms \cite{smale1985efficiency} and the difficulty of problems in numerical analysis \cite{demmel1988probability}. Spielman and Teng were motivated by these concerns when they introduced the paradigm of smoothed analysis \cite{spielman2004smoothed, spielman2009smoothed}.  Their goal was to understand the behavior of algorithms on fixed inputs that had been perturbed by random noise.  Since the operator norm of a random matrix is well-understood, the difficulty in the analysis of the condition number reduces to understanding the least singular value.  In our setting, we examine the least singular value of a fixed matrix $M$ plus a random matrix $N_n$.  The real case with Gaussian noise was addressed in \cite{sankar2006smoothed} and more general models were featured in \cite{tao2010smooth}.  There is a compelling practical motivation for understanding these more general models because discrete models, in particular, are more accurate representations of noise and error in digital settings.  

\section{Previous Results}     
\subsection{Real Eigenvalues}
Edelman, Kostlan, and Shub were able to find precise asymptotics for the expected number of real eigenvalues, $\E E_n$, for a $n \times n$ random matrix with iid $\mathcal{N}(0,1)$ entries, which we will refer to as the Gaussian ensemble.  
\begin{theorem}[\cite{edelman1994many}]
For the $n \times n$ Gaussian ensemble,
	$$
	\lim_{n \to \infty} \frac{\E E_n}{\sqrt n} = \sqrt{\frac{2}{\pi}}.
	$$
\end{theorem}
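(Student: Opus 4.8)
The plan is to obtain an exact formula for $\E E_n$ and then read off its asymptotics. The first step reduces $\E E_n$ to a one-dimensional integral. Writing $E_n = \#\{i : \lambda_i(N_n)\in\R\}$, there is a density $\rho_n$ (the first correlation function of the real spectrum) with
\[
\E E_n \;=\; \int_{\R}\rho_n(x)\,dx, \qquad \rho_n(x)\,dx \;=\; \lim_{\eps\to0}\frac{1}{2\eps}\,\E\,\#\{i:\lambda_i(N_n)\in(x-\eps,x+\eps)\cap\R\}.
\]
So everything comes down to computing $\rho_n$.

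The heart of the argument is the second step: compute $\rho_n(x)$ in closed form using the orthogonal invariance of the law of $N_n$. A real number $x$ lies in the spectrum precisely when $N_n v = xv$ for some real unit vector $v$; since the Gaussian law of $N_n$ is invariant under $N_n\mapsto Q^\top N_n Q$ for orthogonal $Q$, one may condition on $v = e_1$, which turns the eigenvalue equation into the constraints $(N_n)_{11}=x$ and $(N_n)_{i1}=0$ for $i\ge 2$. Carrying out the change of variables from the entries of $N_n$ to the triple $(x,v,B)$, where $B$ is essentially the lower-right $(n-1)\times(n-1)$ block, produces a Jacobian proportional to $|\det(B-xI_{n-1})|$; the Gaussian weight of the $n$ constrained entries contributes a factor $e^{-x^2/2}$ up to constants, and integrating the eigenvector over the unit sphere contributes its surface area. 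Hence $\rho_n(x)$ equals, up to explicit constants, $e^{-x^2/2}\,\E_B|\det(B-xI_{n-1})|$ for a real Ginibre matrix $B$ of size $n-1$, and this expectation can be evaluated explicitly: $|\det(B-xI)|^2 = \det\big((B-xI)(B-xI)^\top\big)$ expands into a polynomial whose Gaussian average is computed term by term. The outcome is a formula for $\rho_n$ in terms of incomplete Gamma functions, of the schematic form
\[
\rho_n(x) \;=\; \frac{1}{\sqrt{2\pi}}\,\frac{\Gamma(n-1,x^2)}{(n-2)!} \;+\; \big(\text{a boundary term of order }|x|^{n-1}e^{-x^2/2}\big).
\]
This Jacobian computation and the ensuing Gaussian integral are where essentially all of the difficulty lies; an equivalent (and equally laborious) route is to start from the known joint eigenvalue density of the real Ginibre ensemble, organized by the number of real eigenvalues and complex-conjugate pairs, and marginalize down to a single real eigenvalue.

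The third step is to integrate $\rho_n$ over $\R$. Exchanging the order of integration in the incomplete Gamma term and treating the boundary term as a standard Gamma integral produces an exact identity of hypergeometric type, for instance
\[
\E E_n \;=\; \tfrac12 + \sqrt{\tfrac{2}{\pi}}\;\frac{\Gamma(n-\tfrac12)}{\Gamma(n-1)}\;{}_2F_1\!\left(1,-\tfrac12;\,n-1;\,\tfrac12\right),
\]
with the exact coefficients depending on the parity of $n$. Finally, Stirling's formula gives $\Gamma(n-\tfrac12)/\Gamma(n-1) = \sqrt{n}\,(1+o(1))$, while the hypergeometric factor tends to $1$, so $\E E_n = \sqrt{2/\pi}\,\sqrt{n}\,(1+o(1))$, which is the claim; this last step is routine. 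The same answer is already visible heuristically from $\rho_n$ itself: $\rho_n(x)\to\frac{1}{\sqrt{2\pi}}$ for $x$ in the bulk $|x|\le(1-\delta)\sqrt n$ and decays rapidly past the edge $|x|\approx\sqrt n$, so $\E E_n \approx \int_{-\sqrt n}^{\sqrt n}\frac{dx}{\sqrt{2\pi}} = \frac{2\sqrt n}{\sqrt{2\pi}} = \sqrt{2/\pi}\,\sqrt n$, the edge region contributing only $O(1)$.
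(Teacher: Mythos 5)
Your overall strategy is essentially the one in the cited reference \cite{edelman1994many}: reduce $\E E_n$ to $\int_\R \rho_n(x)\,dx$, use orthogonal invariance of the Gaussian law and a coarea/Jacobian computation to identify $\rho_n(x)$, up to explicit constants, with $e^{-x^2/2}\,\E_B\bigl|\det(B-xI_{n-1})\bigr|$ for $B$ a real Ginibre matrix of size $n-1$, evaluate that expectation in closed form, integrate to get a hypergeometric-type identity, and read off $\sqrt{2n/\pi}$ from Stirling. Your closing heuristic, that $\rho_n\approx 1/\sqrt{2\pi}$ for $|x|\lesssim\sqrt n$ and decays rapidly past the edge so the bulk contributes $2\sqrt n/\sqrt{2\pi}$, is also the correct picture.

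There is, however, a genuine gap where you evaluate $\E_B\bigl|\det(B-xI)\bigr|$. You propose to use the identity $|\det(B-xI)|^2 = \det\bigl((B-xI)(B-xI)^\top\bigr)$, expand the right-hand side as a polynomial in the entries of $B$, and take the Gaussian average term by term. That procedure computes $\E\bigl[\det(B-xI)^2\bigr]$, which is not the quantity the Jacobian formula requires: you need the first absolute moment $\E\bigl|\det(B-xI)\bigr|$, and a first absolute moment cannot be recovered from a second moment. (Already in the $1\times 1$ case, $\E|\xi|=\sqrt{2/\pi}$ while $\sqrt{\E[\xi^2]}=1$; and the answer you correctly record schematically involves incomplete Gamma functions, which a finite polynomial average in the entries of $B$ could never produce.) The honest evaluation of $\E\bigl|\det(B-xI)\bigr|$ is one of the central technical points of the reference, and is done by a different device, e.g.\ a Mellin/Gamma-function representation of $|\cdot|$, a real Schur decomposition reducing the determinant to a product of independent low-dimensional factors, or the Pfaffian structure of the real Ginibre correlation functions. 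Once that substep is replaced, the integration over $x$ and the Stirling asymptotics go through as you describe.
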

Later, Forrester and Nagao were able to control the variance of this statistic.
\begin{theorem}[\cite{forrester2007eigenvalue}]
For the $n \times n$ Gaussian ensemble,
$$
\Var(E_n) = (2-\sqrt 2) \sqrt{\frac{2n}{\pi}} + o(\sqrt n) .
$$
\end{theorem}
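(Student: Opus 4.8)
The plan is to exploit the exact solvability of the real Ginibre ensemble. The real eigenvalues form a Pfaffian point process on $\R$ whose correlation functions are known in closed form (Lehmann--Sommers, Edelman, Forrester--Nagao, Sommers--Wieczorek, Borodin--Sinclair): there is a $2\times 2$ matrix-valued kernel
$$
K_n(x,y)=\begin{pmatrix} S_n(x,y) & D_n(x,y)\\ \widetilde S_n(x,y) & S_n(y,x)\end{pmatrix},
$$
with $S_n,D_n,\widetilde S_n$ built from Hermite polynomials and incomplete Gamma functions, such that the $k$-point correlation function $\rho^{(k)}_n$ of the real eigenvalues equals the quaternion determinant (Pfaffian) of $[K_n(x_i,x_j)]_{i,j\le k}$. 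In particular $\rho^{(1)}_n(x)=S_n(x,x)$ and
$$
\rho^{(2)}_n(x,y)=\rho^{(1)}_n(x)\rho^{(1)}_n(y)-B_n(x,y),\qquad B_n(x,y):=S_n(x,y)S_n(y,x)+D_n(x,y)\widetilde S_n(x,y).
$$

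First I would reduce the variance to an integral of these correlation functions. Since $E_n=\sum_j \mathbf{1}[\lambda_j\in\R]$ we have $\E E_n=\int_\R\rho^{(1)}_n$ and $\E[E_n(E_n-1)]=\int_{\R^2}\rho^{(2)}_n$, hence
$$
\Var(E_n)=\E E_n+\int_{\R^2}\big(\rho^{(2)}_n(x,y)-\rho^{(1)}_n(x)\rho^{(1)}_n(y)\big)\,dx\,dy=\E E_n-\int_{\R^2}B_n(x,y)\,dx\,dy.
$$
The first term is handled by the density asymptotic $\rho^{(1)}_n(x)=\tfrac{1}{\sqrt{2\pi}}+o(1)$ for $|x|\le(1-\delta)\sqrt n$, with $\rho^{(1)}_n$ exponentially small for $|x|\ge(1+\delta)\sqrt n$ and a negligible transition layer near $|x|=\sqrt n$; this already gives $\E E_n=\sqrt{2n/\pi}+o(\sqrt n)$. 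Everything then rests on the double integral of $B_n$.

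Next I would perform the bulk rescaling. Change variables $x=\sqrt n\,u+\tfrac12 t$, $y=\sqrt n\,u-\tfrac12 t$, with Jacobian $dx\,dy=\sqrt n\,du\,dt$. For each fixed $u\in(-1,1)$ the explicit formulas give translation-invariant limits $S_n\to S_\infty(t)$, $D_n\to D_\infty(t)$, $\widetilde S_n\to\widetilde S_\infty(t)$ — the standard bulk kernels of the real Ginibre ensemble, expressed through $\operatorname{erfc}$ and Gaussian factors — uniformly on compact sets in $(u,t)$ and with Gaussian decay in $t$ uniform in $u$ away from the edge. Dominated convergence then yields
$$
\int_{\R^2}B_n(x,y)\,dx\,dy=\sqrt n\Big(\int_{-1}^{1}du\Big)\Big(\int_\R\big(S_\infty(t)^2+D_\infty(t)\widetilde S_\infty(t)\big)\,dt\Big)+o(\sqrt n),
$$
and a direct evaluation of the remaining Gaussian/error-function integral produces exactly the constant for which $\Var(E_n)=\sqrt{2n/\pi}-(\sqrt2-1)\sqrt{2n/\pi}+o(\sqrt n)=(2-\sqrt2)\sqrt{2n/\pi}+o(\sqrt n)$.

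The \emph{main obstacle} is making the kernel asymptotics uniform and showing that the contributions the naive bulk scaling discards are genuinely $o(\sqrt n)$: (i) the edge region $|x|\approx\sqrt n$, where the incomplete-Gamma factors transition and translation invariance fails, which must be absorbed by a crude bound on $K_n$ there; (ii) the non-oscillatory piece of $\rho^{(1)}_n$ coming from the term $\propto |x|^{n-1}e^{-x^2/2}$, concentrated near $|x|=\sqrt n$, which must be checked to be $o(\sqrt n)$ both in $\E E_n$ and in the cross terms of $B_n$; and (iii) the exact evaluation of $\int_\R\big(S_\infty^2+D_\infty\widetilde S_\infty\big)$, which is where the precise constant $2-\sqrt2$ originates. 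The closed forms for $S_n,D_n,\widetilde S_n$ differ slightly according to the parity of $n$, so each estimate is run twice in essentially the same way. The remaining ingredients — the tail bounds justifying dominated convergence and the interchange of summation and integration — are routine given the Gaussian decay of the kernels.
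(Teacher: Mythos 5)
The paper does not prove this statement; it is cited verbatim from Forrester and Nagao \cite{forrester2007eigenvalue} as background, so there is no in-paper proof to compare your proposal against. That said, your sketch follows essentially the same route as the cited source: reduce $\Var(E_n)$ to $\E E_n$ plus the integrated truncated two-point function via the standard identity $\Var(E_n)=\int\rho^{(1)}+\int(\rho^{(2)}-\rho^{(1)}\otimes\rho^{(1)})$, invoke the Pfaffian/matrix-kernel structure of the real Ginibre real-eigenvalue process, rescale to the translation-invariant bulk kernel, and evaluate a Gaussian/$\operatorname{erfc}$ integral to extract the constant $2-\sqrt{2}$. You also correctly flag the genuine technical points one must settle (uniformity of kernel asymptotics, edge and transition-layer contributions, parity of $n$). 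Your outline is a faithful and accurate account of how this theorem is actually proved, even though the present paper merely quotes it.
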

Edelman \cite{edelman1997probability} also derived exact formulas for the probability that the random real gaussian matrix has exactly $k$ real eigenvalues and expressed the joint densities of those eigenvalues explicitly.  The following theorem is a consequence of evaluating the formula for $k=n$.
\begin{theorem}[\cite{edelman1997probability}]
	The probability that the random real gaussian matrix has all real eigenvalues is $2^{-n(n-1)/4}$.
\end{theorem}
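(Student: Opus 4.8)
The plan is to follow Edelman's route: realize the all--real--spectrum event as a region in matrix space, change variables to the (real) Schur form, and integrate out everything but the eigenvalues. Write $N_n$ for the $n\times n$ real Ginibre matrix, with law $c_n\exp(-\tfrac12\tr(N_n^{T}N_n))\,dN_n$ on $\R^{n\times n}$, where $c_n=(2\pi)^{-n^2/2}$. Every real matrix has a real Schur decomposition $N_n=Q(\Lambda+T)Q^{T}$ with $Q\in O(n)$, $T$ strictly upper triangular, and $\Lambda$ block diagonal with $1\times1$ blocks carrying the real eigenvalues and $2\times2$ blocks carrying the complex--conjugate pairs. The event that every eigenvalue is real is, up to a set of measure zero, the event that the spectrum is real and \emph{simple}; on that set we may take $\Lambda=\diag(\lambda_1,\dots,\lambda_n)$ with $\lambda_1>\cdots>\lambda_n$ and parametrize $N_n$ by the triple $(\lambda,T,Q)$, with $Q$ ranging over $O(n)$ modulo the finite stabilizer of the ordered spectrum (a dimension count $n+\binom{n}{2}+\binom{n}{2}=n^2$ confirms this is the right set of coordinates). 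First I would compute the Jacobian of $(\lambda,T,Q)\mapsto Q(\Lambda+T)Q^{T}$; the standard computation splits it into the Vandermonde factor $\prod_{i<j}|\lambda_i-\lambda_j|$ coming from the eigenvalue and flag directions, times a factor depending on $Q$ only, and crucially with no dependence on $T$. (An alternative derivation uses the Pfaffian point--process structure of the real Ginibre spectrum, but the Schur change of variables is the most direct route to the case $k=n$.)

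Next I would carry out the integrations. Since $\Lambda$ is diagonal and $T$ strictly upper triangular, the cross terms vanish and $\tr(N_n^{T}N_n)=\sum_i\lambda_i^2+\sum_{i<j}T_{ij}^2$, so the $T$--integral is a product of $\binom{n}{2}$ standard Gaussian integrals and contributes $(2\pi)^{n(n-1)/4}$. The $Q$--integral over $O(n)$ (modulo the stabilizer) contributes a purely combinatorial constant expressible through the volumes of orthogonal groups. What is left is the $\beta=1$ Mehta--Selberg integral
\[
\int_{\lambda_1>\cdots>\lambda_n}\ \prod_{i<j}(\lambda_i-\lambda_j)\ e^{-\frac12\sum_i\lambda_i^2}\,d\lambda,
\]
whose value is a known explicit product of Gamma values.

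Finally I would assemble the pieces: the probability equals $c_n$ times the Jacobian constant times the $Q$--volume constant times the Gaussian $T$--factor times the Mehta integral. Dividing out $c_n=(2\pi)^{-n^2/2}$ and simplifying, the Gamma--function factors from the Mehta integral cancel against the orthogonal--group volumes, and the surviving powers of $2\pi$, $\pi$ and $2$ collapse to $2^{-n(n-1)/4}$. The main obstacle, I expect, is the exact evaluation of the Schur Jacobian together with the attendant bookkeeping for the $Q$--integration: one must correctly identify the real flag manifold over which $Q$ varies and its finite stabilizer, and then match its volume against the Selberg constant, since a slip there spoils the final constant even though the leading structure is robust. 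A useful consistency check is $n=2$, where $2^{-n(n-1)/4}=2^{-1/2}$ must match the elementary computation of $\P\big((a-d)^2+4bc\ge 0\big)$ for independent standard Gaussian entries $a,b,c,d$.
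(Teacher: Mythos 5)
The paper does not prove this statement; it is quoted verbatim from Edelman's 1997 paper as background, so there is no internal proof to compare against. Your sketch, however, is indeed the route Edelman takes: real Schur form $N=Q(\Lambda+T)Q^{T}$, Jacobian factoring into the Vandermonde $\prod_{i<j}|\lambda_i-\lambda_j|$ with no $T$-dependence, a Gaussian $T$-integral of $(2\pi)^{n(n-1)/4}$, Haar integration over $O(n)$ modulo the sign stabilizer $\{\pm 1\}^n$, and the $\beta=1$ Mehta--Selberg integral for the eigenvalues.

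Two small cautions in the bookkeeping you correctly flag as the crux. First, the Jacobian does not actually carry a factor ``depending on $Q$ only'' that varies with $Q$; by equivariance the $Q$-directions just contribute Haar measure times a constant, so what survives after integrating $Q$ is $\mathrm{vol}(O(n))/2^n$ (the $2^n$ being the order of the diagonal sign group stabilizing an ordered real spectrum) --- the way you wrote it could be read as allowing a genuine $Q$-dependence, which would be wrong. Second, when you invoke Mehta's integral, be careful that the $\beta=1$ formula is usually quoted for the full $\R^n$ integral of $\prod_{i<j}|\lambda_i-\lambda_j|$; restricting to the ordered chamber $\lambda_1>\cdots>\lambda_n$ you both divide by $n!$ and can drop the absolute value, and it is the ordered-chamber value that must be matched against the $O(n)/\{\pm1\}^n$ volume and $(2\pi)^{-n^2/2}$. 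Your $n=2$ sanity check is correct: $\P\bigl((a-d)^2+4bc\ge0\bigr)=\E\bigl[e^{-V^2/2}\bigr]=2^{-1/2}$ with $V\sim N(0,1)$, matching $2^{-n(n-1)/4}$. Modulo the unexecuted constant chase, the plan is sound and faithful to Edelman.
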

The techniques used in the proof of the above results are specialized for gaussian random variables. Recently, Tao and Vu were able to extend this result to a larger class of random variables.

\begin{theorem}[\cite{taovuuniversality}, Corollary 17]
For a real random matrix $N_n$ with entries $\xi_{ij}$ such that
$$
\P(|\xi_{ij}| \geq t) \leq C \exp(-t^c)
$$
for constants $C, c >0$ (independent of $n$) for all $i,j$ and $\xi_{ij}$ match the moments of $\mathcal{N}(0,1)$, then 
$$
\E E_n = \sqrt{\frac{2n}{\pi}} + O(n^{1/2-c'})
$$
and
$$
\Var E_n = O(n^{1-c'})
$$
for some fixed $c' >0$.
\end{theorem}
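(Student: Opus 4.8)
The plan is to derive this statement as a corollary of a four-moment universality theorem for the spectrum of $N_n$, proved by the Lindeberg exchange strategy. The first step is to write $\E E_n$ — and the second factorial moment, which together with $\E E_n$ controls $\Var E_n$ — as an integral over $\R$ (respectively $\R^2$) of a real-eigenvalue correlation function, and then to express that correlation function through matrix quantities that depend \emph{continuously, with controllable derivatives,} on the entries $\xi_{ij}$: essentially the characteristic polynomial $\det(N_n - z)$ and the resolvents $(N_n - z)^{-1}$ along and near the real axis. For the Gaussian ensemble these integrals can be evaluated exactly — this is the Edelman--Kostlan--Shub computation, which already yields the Gaussian asymptotics recorded above — so it suffices to show that the integrand is stable under changing the law of the entries while preserving the first four moments.

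The second step is the swap itself. After a preliminary truncation of the $\xi_{ij}$ at scale $n^{\delta}$ (whose error is negligible precisely because of the sub-exponential tail hypothesis), enumerate the $n^2$ entries and replace them one at a time by $\mathcal{N}(0,1)$ variables. For a single replacement, Taylor-expand the test statistic in that entry to fourth order: the contributions of orders $0$ through $3$ cancel between the two laws by the matching-moments hypothesis, and the order-$4$ remainder is $O(n^{-2-c''})$ per entry, provided the fourth derivative of the statistic in one entry is $O(n^{C})$ with overwhelming probability. Those derivative bounds reduce to (i) $\|N_n\| = O(n^{1/2})$ with overwhelming probability, which is exactly the regime guaranteed by the sub-exponential tail, and (ii) polynomial lower bounds on $s_n(N_n - z)$ for $z$ real — that is, on $\|(N_n - z)^{-1}\|$ — which is precisely the type of estimate developed elsewhere in this paper. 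Summing the $n^2$ per-entry errors gives a total discrepancy of $O(n^{-c'})$ relative to the Gaussian value, which after multiplying by the Gaussian scale $n^{1/2}$ produces $\E E_n = \sqrt{2n/\pi} + O(n^{1/2-c'})$ and $\Var E_n = O(n^{1-c'})$.

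The main obstacle is that $E_n$ is not a continuous function of the matrix: as the entries vary, a complex-conjugate pair of eigenvalues can collide on the real axis and bifurcate into two real eigenvalues, so the counting functional jumps, and a smooth swap cannot be applied to $E_n$ directly. The remedy is to regularize — replace the indicator of ``eigenvalue on $\R$'' by a mollified version at a small scale $\eta = n^{-A}$, carry out the four-moment comparison for the mollified functional, and then remove the mollification. Controlling the de-regularization error is the delicate part: one needs \emph{a priori} bounds ruling out eigenvalues at distance between $\eta$ and, say, $\eta^{1/2}$ from the real axis except with negligible probability, together with a matching upper bound on the local density of eigenvalues near the axis; both rest again on tail bounds for $s_n(N_n - z)$ with $z$ near $\R$. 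Once this comparison is established, combining it with the exact Gaussian asymptotics above (and with Forrester--Nagao for the variance) and tracking the powers of $n$ through the estimates is routine bookkeeping.
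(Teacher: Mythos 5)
The theorem you were given is not proved in this paper: it is stated in the ``Previous Results'' section as a verbatim citation of Corollary~17 of Tao and Vu's universality paper~\cite{taovuuniversality}, with no argument supplied. There is therefore no in-paper proof to compare your proposal against; the paper simply uses this result as background context for the problem.

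That said, your blind reconstruction is a recognizable account of the actual Tao--Vu strategy: a four-moment Lindeberg swap applied after truncation, mollification of the discontinuous real-eigenvalue counting functional, and least-singular-value control near the real axis. A few places where the sketch is lighter than the real argument demands. First, you speak loosely of ``resolvents $(N_n-z)^{-1}$,'' but for a non-Hermitian matrix the real eigenvalues are not directly accessible via resolvent expansion in the way Hermitian spectral statistics are; the Tao--Vu functional is built from the log-characteristic polynomial (or equivalently a determinant/hermitization quantity), and the derivative bounds in the swap are taken with respect to that, which is a nontrivial bookkeeping difference. Second, the de-mollification step is subtler than you acknowledge: you need to show that, with high probability, no eigenvalues lie at imaginary distance in the window $(\eta, \eta^{1/2})$ from the axis, \emph{and} that the number of genuinely real eigenvalues is locally concentrated at the $\eta$ scale; a small-ball bound on $s_n(N_n-z)$ alone is not enough, and this is where the bulk of the technical work in the cited paper sits. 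Third, for the variance you correctly flag the role of the two-point correlation function and the Forrester--Nagao Gaussian baseline, but establishing universality of the two-point function requires a swap over pairs of spectral test functions, with correspondingly stronger stability estimates. In short: the route is right, but as written it is a roadmap rather than a proof, and since the paper itself provides no proof, you would need to reproduce essentially the entire machinery of~\cite{taovuuniversality} to close it.
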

 Outside of this class of random variables, almost nothing is understood.  In fact, the following toy problem was posed in Van Vu's talk at the 2014 ICM in Seoul and remains unresolved.
\begin{problem}
	Prove that a random $\pm 1$ random matrix has at least two real eigenvalues with high probability.
\end{problem}
\subsection{Least Singular Value}
In contrast to real eigenvalues, much is known about the universality of the least singular value in the real case.  One can deduce from a result of Edelman \cite{edelman1988eigenvalues} that
\begin{theorem}
	For $N_n$, a random matrix populated with iid $\mathcal{N}(0,1)$ random variables, we have that for any $\eps > 0$,
	$$
	\P(s_n(N_n) \leq \eps n^{-1/2}) \leq \eps.
	$$
\end{theorem}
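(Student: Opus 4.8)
The plan is to read the bound off Edelman's exact description of the law of the smallest singular value of a square real Gaussian matrix \cite{edelman1988eigenvalues}, together with an elementary estimate on the associated density. The reason one needs the exact law rather than a soft argument is that the constant must be sharp: e.g.\ from $s_n(N_n) \le \min_k \dist(R_k, H_k)$, where $R_k$ ranges over the rows of $N_n$ and $H_k$ is the span of the remaining rows, together with $\dist(R_k, H_k) \stackrel{d}{=} |g|$ for a standard normal $g$ and a union bound, one only gets $\P(s_n(N_n) \le \eps n^{-1/2}) \lesssim \sqrt n\, \eps$, which is off by a factor $\sqrt n$.

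Since the left-hand side is a probability, it is at most $1$, so we may assume $\eps \in (0,1)$. Set $\sigma_n := n^{1/2} s_n(N_n)$, so the goal is $\P(\sigma_n \le \eps) \le \eps$. Edelman's analysis yields the law of $n\, s_n(N_n)^2 = \sigma_n^2$ in closed form for every $n$; in the limit $n \to \infty$ this law has density $\tfrac{1 + \sqrt x}{2\sqrt x}\, e^{-(x/2 + \sqrt x)}$ on $(0,\infty)$, and a change of variables gives the density of $\sigma_n$ itself, with limiting form $f_\infty(t) = (1 + t)\, e^{-t^2/2 - t}$ for $t \ge 0$. I would then reduce the theorem to the pointwise bound $f_n(t) \le 1$ on $[0,1]$, where $f_n$ denotes the density of $\sigma_n$; granting it,
$$
\P(\sigma_n \le \eps) = \int_0^\eps f_n(t)\, dt \le \int_0^\eps 1 \, dt = \eps,
$$
which is the claim.

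For the limiting density the bound is immediate: $f_\infty(0) = 1$ and $f_\infty'(t) = -(t^2 + 2t)\, e^{-t^2/2 - t} \le 0$ for $t \ge 0$, so $f_\infty$ is non-increasing and hence $f_\infty \le 1$; equivalently the limiting distribution function $t \mapsto 1 - e^{-t^2/2 - t}$ stays below the identity on $[0, \infty)$. The substantive step is to upgrade $f_n \le 1$ on $[0,1]$ — equivalently, that the finite-$n$ distribution function of $\sigma_n$ lies below $t \mapsto t$ near the origin — to \emph{every} $n$, not merely in the limit. I would try this either by showing $\sigma_n$ stochastically dominates its limiting law, so its distribution function is bounded by $1 - e^{-t^2/2 - t} \le t$, or, absent a clean monotonicity in $n$, by bounding Edelman's explicit finite-$n$ density directly and checking that it never exceeds $1$ on $[0,1]$. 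This finite-$n$ verification is where I expect the real work to lie; the reduction to $\eps \in (0,1)$, the change of variables, and the integration are routine.
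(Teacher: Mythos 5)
The paper supplies no proof of this statement: it introduces the theorem with ``One can deduce from a result of Edelman \cite{edelman1988eigenvalues}'' and gives no derivation, so there is no in-paper argument to compare against. Your strategy --- go through Edelman's exact law for $n\,s_n(N_n)^2$, change variables to $\sigma_n = \sqrt n\, s_n(N_n)$, reduce to a pointwise density bound $f_n \le 1$ on $[0,1]$, and integrate --- is the natural route to a constant this sharp, and your verification that the limiting density $f_\infty(t) = (1+t)e^{-t^2/2 - t}$ decreases from $f_\infty(0)=1$ is correct (equivalently $1 - e^{-t^2/2 - t} \le t$).

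The genuine gap, which to your credit you flag explicitly, is the finite-$n$ step. The theorem is asserted for every $n$, not asymptotically, and you carry out neither of the two routes you sketch: stochastic domination of the limiting law by $\sigma_n$ is not established (and there is no a priori reason the convergence of the laws is monotone in $n$), and the direct bound on Edelman's finite-$n$ density is not attempted. Since ``one can deduce from Edelman's result'' is precisely the content that needs to be supplied, the proposal as written does not close the deduction; you would need to invoke Edelman's explicit finite-$n$ density (or distribution-function) formula and check, for each $n$, that it stays below $1$ on $[0,1]$.

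Two smaller inaccuracies in the motivating paragraph. First, the inequality $s_n(N_n) \le \min_k \dist(R_k,H_k)$ is an \emph{upper} bound on $s_n$, so it cannot by itself upper-bound $\P(s_n \le t)$; the soft argument actually requires a lower bound on $s_n$, e.g.\ $s_n^{-2} \le \sum_k \dist(c_k,H_k)^{-2}$ via the Hilbert--Schmidt norm of $N_n^{-1}$, or the Rudelson--Vershynin style bound $s_n \ge |u_k|\,\dist(c_k,H_k)$ for the minimizing $u$. Second, the bare union bound on the distances loses a factor of $n$, not $\sqrt n$; passing to incompressible vectors and averaging over the spread set brings this down to a constant $C>1$, but not to $1$. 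Neither point affects your conclusion that the exact law is needed for the sharp constant.
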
   
Sankar, Teng, and Spielman \cite{sankar2006smoothed} were able to prove an analogous result for the smoothed analysis model.
\begin{theorem}
	There exists a constant $C >0$ such that for $M$, a deterministic matrix, and $N_n$ a random matrix populated with iid $\mathcal{N}(0,1)$ random variables, we have that for any $\eps > 0$ and $M_n := M + N_n$,
	$$
	\P(s_n(M_n) \leq \eps n^{-1/2})\leq C \eps.
	$$
\end{theorem}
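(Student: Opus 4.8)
The plan is to work with the inverse. Since $s_n(M_n)^{-1}=\|M_n^{-1}\|_{\mathrm{op}}$, the asserted bound is equivalent to the estimate $\P(\|M_n^{-1}\|_{\mathrm{op}}\ge x)\le C\sqrt n/x$, valid for every $x>0$; this is what I would prove, after which taking $x=\sqrt n/\eps$ gives the theorem (the event that $M+N_n$ is singular has probability zero because $N_n$ has a density). The naive route is to use $\|M_n^{-1}\|_{\mathrm{op}}\le\|M_n^{-1}\|_F$ together with $\|M_n^{-1}\|_F^2=\sum_{j=1}^n\dist(R_j,H_j)^{-2}$, where $R_j$ is the $j$-th row of $M_n$ and $H_j$ the span of the remaining rows, and then to control each $\dist(R_j,H_j)$ by Gaussian anti-concentration; but the union bound over the $n$ rows degrades this to $Cn\eps$. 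The key is instead to replace the operator norm by the action of $M_n^{-1}$ on a single random direction.

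\emph{Step 1 (reduction to one fixed direction).} Let $u$ be uniform on the unit sphere $S^{n-1}$ and independent of $N_n$. Conditionally on $N_n$, let $w=w(N_n)$ be a unit eigenvector of $(M_nM_n^{\mathsf{T}})^{-1}$ for its largest eigenvalue $\|M_n^{-1}\|_{\mathrm{op}}^2$; then $\|M_n^{-1}y\|^2=\langle y,(M_nM_n^{\mathsf{T}})^{-1}y\rangle\ge\|M_n^{-1}\|_{\mathrm{op}}^2\langle y,w\rangle^2$ for every $y$. Since $\langle u,w\rangle$ has the law of a single coordinate of a uniform point on $S^{n-1}$, there are absolute constants $c_0,c_1>0$ with $\P_u(|\langle u,w\rangle|\ge c_0n^{-1/2})\ge c_1$ for every unit vector $w$. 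Combining these (conditioning on $N_n$, and using that $u$ is independent of $N_n$) gives $\P_{N_n,u}(\|M_n^{-1}u\|\ge c_0n^{-1/2}x)\ge c_1\,\P_{N_n}(\|M_n^{-1}\|_{\mathrm{op}}\ge x)$; taking expectations over $u$ via Fubini, it therefore suffices to prove the pointwise bound
$$\P(\|M_n^{-1}u\|\ge t)\le C't^{-1}\qquad\text{for every fixed unit vector }u.$$

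\emph{Step 2 (the fixed-direction bound).} Fix a unit vector $u$ and pick an orthogonal $Q$ with $Qu=e_n$. Because $(QM_n)^{-1}(Qu)=M_n^{-1}u$ and $QN_n$ has the same distribution as $N_n$, the variable $\|M_n^{-1}u\|$ has the same distribution as $\|(QM+N_n)^{-1}e_n\|$, so we may assume $u=e_n$ after replacing $M$ by $QM$. Now $\|(M+N_n)^{-1}e_n\|$ is the Euclidean norm of the last column of $(M+N_n)^{-1}$, which equals $1/\dist(R_n,W)$, where $R_n$ is the last row of $M+N_n$ and $W$ is the span of its first $n-1$ rows --- this is the standard fact that the last column of an inverse is orthogonal to every row but the last. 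The subspace $W$ depends only on the first $n-1$ rows, hence is independent of the last row of $N_n$; choosing (measurably) a unit $\hat w\perp W$, we get $\dist(R_n,W)\ge|\langle R_n,\hat w\rangle|$, and conditionally on the first $n-1$ rows $\langle R_n,\hat w\rangle$ is a real Gaussian of variance $1$ with some mean, so $\P(|\langle R_n,\hat w\rangle|\le\delta)\le\sqrt{2/\pi}\,\delta$. Hence $\P(\|M_n^{-1}u\|\ge t)\le\P(\dist(R_n,W)\le t^{-1})\le\sqrt{2/\pi}\,t^{-1}$, which is the bound required in Step 1.

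Combining the two steps yields $\P(\|M_n^{-1}\|_{\mathrm{op}}\ge x)\le C\sqrt n/x$, and the theorem follows. The only genuinely non-routine point is Step 1: it is exactly the device that trades the lossy union bound over the $n$ rows for a single Gaussian anti-concentration estimate, and it is what produces the optimal $\eps$-dependence rather than $n\eps$. I would also flag that Step 2 leans on the full orthogonal invariance of the Gaussian ensemble; that convenience disappears for the general sub-Gaussian $\zeta$ treated in the rest of the paper, which is precisely why the main tail bound there cannot be obtained by these two short steps and instead requires the complex Littlewood-Offord theory developed below.
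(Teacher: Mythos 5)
Your proof is correct. Note that the paper does not actually prove this theorem — it cites it verbatim from Sankar, Spielman, and Teng~\cite{sankar2006smoothed} as background. Your argument is essentially the one given in that reference: the ``random test vector'' device (sampling $u$ uniformly on $S^{n-1}$, using that a coordinate of a random unit vector exceeds $c_0 n^{-1/2}$ with constant probability to trade the operator norm for $\|M_n^{-1}u\|$ at a single direction, at the cost of a $\sqrt n$ factor), followed by orthogonal invariance of the Gaussian ensemble to reduce to $u = e_n$, the identity $\|M_n^{-1}e_n\| = 1/\dist(R_n,W)$, and one Gaussian anti-concentration bound for $\langle R_n,\hat w\rangle$ with $\hat w$ independent of $R_n$. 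Your own closing remark is also accurate and matches the paper's motivation: both the rotation-invariance step and the clean one-dimensional Gaussian anti-concentration are unavailable for general sub-Gaussian $\zeta$, which is exactly why Theorem~\ref{LeastSingular} in the paper requires the decomposition of the sphere into compressible and incompressible vectors, the $LCD$-based Littlewood--Offord machinery, and the net argument over level sets of the $LCD$, rather than this two-step argument.
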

They further conjectured that
\begin{conjecture}
Let $\xi$ be a mean zero, variance at least 1, subgaussian random variable.  Let $N_n$ be a $n \times n$ random matrix with iid entries $\xi$.  There exists constants $C, c > 0$ such that for every $\eps \geq 0$
	$$
	\P(s_n(N_n) \leq \eps n^{-1/2}) \leq C \eps + c^n.
	$$ 
\end{conjecture}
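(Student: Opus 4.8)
The plan is to follow the geometric/anti-concentration strategy of Rudelson and Vershynin. Write the least singular value as $s_n(N_n) = \inf_{x \in S^{n-1}} \|N_n x\|_2$, where $S^{n-1}$ is the Euclidean unit sphere in $\R^n$, and (after noting that the inequality is trivial unless $\eps$ is a small constant, since $C\eps \ge 1$ otherwise) partition $S^{n-1}$ into \emph{compressible} vectors --- those within Euclidean distance $\rho$ of some vector supported on at most $\delta n$ coordinates --- and the complementary set of \emph{incompressible} vectors, for two small absolute constants $\delta, \rho > 0$ to be fixed. It then suffices to bound $\P(\inf \|N_n x\|_2 \le \eps n^{-1/2})$ separately over the two pieces.

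On the compressible vectors I would use a net argument. The set of compressible vectors admits a $\kappa$-net of cardinality $\exp(c_0(\delta)\, n)$ with $c_0(\delta) \to 0$ as $\delta \to 0$; for a fixed $x \in S^{n-1}$ the coordinates of $N_n x$ are i.i.d.\ mean-zero subgaussian with variance $\|x\|_2^2 \ge 1$, so a one-dimensional small-ball estimate together with independence across the $n$ rows (tensorization) gives $\P(\|N_n x\|_2 \le \eps_1 \sqrt n) \le e^{-c_1 n}$ for a small constant $\eps_1$. Combining this over the net with the standard deviation bound $\P(\|N_n\| \ge C_1 \sqrt n) \le e^{-c_2 n}$ (to transfer from the net to all compressible vectors), and choosing $\delta,\rho,\kappa$ suitably, yields $\P(\inf_{x\ \mathrm{comp}} \|N_n x\|_2 \le c_3 \sqrt n) \le e^{-c_4 n}$; since $\eps n^{-1/2} \le c_3 \sqrt n$ for $\eps$ small, this piece contributes only a term of the form $c^n$.

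For the incompressible vectors I would pass to the distance between a column of $N_n$ and the span of the others. By the Rudelson--Vershynin invertibility-via-distance lemma, $\P(\inf_{x\ \mathrm{incomp}} \|N_n x\|_2 \le \eps \rho\, n^{-1/2}) \le (\delta n)^{-1}\sum_{k=1}^n \P(\dist(X_k, H_k) \le \eps)$, where $X_k$ is the $k$-th column of $N_n$ and $H_k = \Span(X_j : j \ne k)$. Writing $v_k \in S^{n-1}$ for a unit normal of $H_k$, which is independent of $X_k$, we have $\dist(X_k, H_k) = |\langle X_k, v_k\rangle|$, a linear form $\sum_i \xi_{ik}(v_k)_i$ in the independent entries of $X_k$. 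Two facts about $v_k$ drive the estimate. First, with probability $1 - e^{-cn}$ the normal $v_k$ is itself incompressible: if it were compressible it would be an almost-null vector of the $(n-1)\times n$ matrix formed by the other columns, and the compressible-invertibility bound of the previous paragraph (applied to that submatrix) excludes this. Second, conditionally on $v_k$ being incompressible, a net argument stratified by the least common denominator $D(v_k)$ --- bounding, for each dyadic window $D(v_k)\in[2^\ell,2^{\ell+1}]$, the covering number of incompressible unit vectors of that LCD against the moderate-deviation small-ball probability of the corresponding random linear forms, and summing over the $O(n)$ scales --- shows $\P(D(v_k) \le e^{c'n}) \le e^{-c''n}$. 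On the remaining event, the inverse Littlewood--Offord theorem gives $\sup_{a\in\R}\P(|\langle X_k, v_k\rangle - a| \le \eps \mid v_k) \le C\eps + C/D(v_k) \le C\eps + e^{-c'n}$. Averaging over $k$ and combining with the compressible estimate gives $\P(s_n(N_n) \le \eps n^{-1/2}) \le C\eps + c^n$ after renaming constants.

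The main obstacle is the least-common-denominator net argument in the incompressible regime: one must quantify the metric entropy of the set of incompressible unit vectors of a prescribed LCD and balance it precisely against the anti-concentration probability of the induced linear forms, so that the union bound over all dyadic LCD scales --- and hence over all the ``structured'' directions the random normal $v_k$ could a priori occupy --- still closes with an exponentially small total probability. This is the heart of the inverse-Littlewood--Offord method and is exactly where subgaussianity enters quantitatively. A minor technical point is that the hypothesis allows $\mathrm{Var}(\xi) \ge 1$ rather than $=1$; one checks that the one-dimensional small-ball and net estimates degrade monotonically (indeed favorably) as the variance grows, so no essential change is needed.
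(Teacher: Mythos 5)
The statement you are asked to prove appears in the paper only as the Sankar--Teng--Spielman conjecture, which the paper immediately observes was resolved by Rudelson and Vershynin in \cite{rudelson2008littlewood}; the paper gives no proof of its own and instead adapts that machinery to the complex smoothed-analysis setting in Theorem~\ref{LeastSingular}. Your proposal is a correct high-level sketch of exactly that Rudelson--Vershynin argument --- compressible/incompressible decomposition, a net plus tensorization on the compressible part, invertibility-via-distance reducing the incompressible part to anti-concentration of $\langle X_k, v_k\rangle$, and an LCD-stratified net showing the random normal is unstructured --- so it follows the same route as the proof the paper cites and subsequently generalizes.
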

\begin{definition}
	A random variable $\xi$ is \emph{subgaussian} if there exists a $B > 0$ such that
	$$
	\P(|\xi| > t) \leq 2 \exp( -t^2/B^2) \text{ for all } t>0.
	$$
	The minimal $B$ in the inequality is known as the \emph{subgaussian moment} of $\xi$.
\end{definition}
For a very general class of random variables, Tao and Vu \cite{tao2010smooth} showed that 
\begin{theorem}
Let $\xi$ be a random variable with mean zero and bounded second moment, and let $\gamma\geq 1/2$, $A\geq 0$ be constants.  Then there is a constant $C$ depending on $\xi,\gamma, A$ such that the following holds.  Let $N_n$ be the random matrix of size $n$ whose entries are iid copies of $\xi$.  Let $M$ be a deterministic matrix satisfying $\|M\| \leq n^{\gamma}$ and let $M_n :=M + N_n$.  Then
$$
\P(s_n(M_n) \leq n^{-(2A+2)\gamma + 1/2}) \leq C \left(n^{-A+ o(1)} + \P(\|N_n\| \geq n^{\gamma}) \right).
$$
\end{theorem}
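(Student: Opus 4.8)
The plan is to follow the Tao--Vu/Rudelson--Vershynin philosophy: reduce invertibility to a distance problem, then to a small-ball (Littlewood--Offord) estimate, keeping careful track of the operator-norm scale $n^{\gamma}$. Throughout I would condition on the event $\{\|N_n\| \le n^{\gamma}\}$; its complement contributes the term $\P(\|N_n\| \ge n^{\gamma})$, and on this event $\|M_n\| \le 2n^{\gamma}$. Write $X_1,\dots,X_n$ for the rows of $M_n$ and $H_i = \Span(X_j : j \ne i)$. The starting point is the elementary identity $\sum_{i=1}^n \dist(X_i,H_i)^{-2} = \sum_{j=1}^n s_j(M_n)^{-2} \ge s_n(M_n)^{-2}$, valid whenever $M_n$ is invertible; if $M_n$ is singular then some row lies in the span of the others, so the left side is infinite too. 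Hence, setting $\nu_0 := n^{-(2A+2)\gamma + 1/2}$, on $\{s_n(M_n) \le \nu_0\}$ there is an index $i$ with $\dist(X_i,H_i) \le \nu_0\sqrt n$, and by a union bound over $i$ together with exchangeability of the rows it suffices to prove
\[
\P\!\left(\dist(X_1,H_1) \le \nu_0\sqrt n,\ \|N_n\| \le n^{\gamma}\right) \;\le\; n^{-A-1+o(1)} .
\]

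Next I would condition on $H_1$, equivalently on the rows $X_2,\dots,X_n$, which depend only on $M$ and on $N_n$ with its first row removed, and let $w$ be a unit vector normal to $H_1$; such a $w$ exists since $\dim H_1 \le n-1$, and it is independent of the first row of $N_n$. Then $\dist(X_1,H_1) \ge |\langle X_1, w\rangle|$ and $\langle X_1, w\rangle = \langle M_{1\cdot}, w\rangle + \sum_{k=1}^n \xi_{1k}\,\overline{w_k}$, where $M_{1\cdot}$ is the first row of $M$. So, conditionally on $H_1$, the task becomes a small-ball estimate for the weighted sum $\sum_k \xi_{1k}\,\overline{w_k}$ about the fixed point $-\langle M_{1\cdot},w\rangle$, at scale $\nu_0\sqrt n$. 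If $w$ were ``generic'' (incompressible, with large essential least common denominator), this small-ball probability would be at most $n^{-A-1+o(1)}$ at that scale, by the Littlewood--Offord inequalities together with the inverse Littlewood--Offord theorem of Tao--Vu. The precise value of $\nu_0$ is chosen exactly so that the scale $\nu_0\sqrt n$ sits above the threshold below which such a bound can fail, and this is where the factor $\|M\| \le n^{\gamma}$ enters, both through the size of the shift $\langle M_{1\cdot},w\rangle$ and through the operator-norm losses in the net argument below.

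The remaining, and principal, point I would need to settle is the probability that $w$ is not generic, i.e.\ that the normal direction to $\Span(X_2,\dots,X_n)$ is arithmetically structured. If $w$ is within a fine distance $\nu'$ (a sufficiently small positive power of $n^{-1}$, depending on $\gamma$) of some vector $w_0$ and $\langle X_j, w\rangle = 0$ for all $j \ge 2$, then $|\langle X_j, w_0\rangle| \le \|X_j\|\,\nu' \le 2n^{\gamma}\nu'$ for all $j \ge 2$ on $\{\|N_n\| \le n^{\gamma}\}$, since each row of $M_n$ has Euclidean norm at most $\|M_n\| \le 2n^{\gamma}$. For a fixed such $w_0$ this forces the $n-1$ independent events $\{|\langle X_j, w_0\rangle| \le 2n^{\gamma}\nu'\}$, each of probability bounded away from $1$ by a constant depending only on $\xi$ (using that $\xi$ is non-degenerate), so the joint probability is at most $c^{\,n}$. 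The difficulty is that the family of structured directions is too large for a naive union bound over a net. I would overcome this exactly as in the Tao--Vu treatment of the singularity probability: split the range of the small-ball probability into $O(\log n)$ dyadic blocks, apply the inverse Littlewood--Offord theorem within each block to place the relevant directions in a bounded number of generalized arithmetic progressions of controlled rank and volume $n^{O(A)}$, bound the covering number of each such family, and sum the resulting $c^{\,n}\cdot n^{O(A)}$ estimates over the $O(\log n)$ blocks; the cumulative $n^{o(1)}$ loss coming from the net cardinalities and the progression parameters is the source of the $o(1)$ in the exponent. Combining this with the generic-$w$ small-ball bound, multiplying by $n$ for the row union bound, and adding the contribution of $\{\|N_n\| \ge n^{\gamma}\}$ gives the claim. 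The hardest part is this last step: reconciling the inverse Littlewood--Offord structure of the normal vector with a usable net estimate, with the scales $\nu_0$, $\nu'$, the least-common-denominator threshold, and the progression sizes all balanced simultaneously against $\|M\| \le n^{\gamma}$.
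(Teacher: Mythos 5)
Note first that the paper itself offers no proof of this statement: it appears in the ``Previous Results'' section as a quotation of Tao and Vu \cite{tao2010smooth}, so there is no in-paper argument to compare yours against; what follows therefore compares against the Tao--Vu methodology and the related machinery the paper does develop for its own least-singular-value theorem.

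Your sketch correctly identifies the overall philosophy (distance-to-hyperplane reduction, small-ball estimate, inverse Littlewood--Offord classification of structured normals), but there is a concrete quantitative gap in the reduction step. Set $\nu_0 := n^{-(2A+2)\gamma + 1/2}$. You pass from $\{s_n(M_n)\le\nu_0\}$ via $\sum_i \dist(X_i,H_i)^{-2} = \sum_j s_j(M_n)^{-2}$ and a union bound over $i\in[n]$ to the claim $\P\bigl(\dist(X_1,H_1)\le\nu_0\sqrt n\bigr)\le n^{-A-1+o(1)}$. At $\gamma=1/2$ one has $\nu_0\sqrt n = n^{-A}$, and the best small-ball estimate achievable at that scale, even for a perfectly generic (incompressible, exponentially large LCD) normal direction and even if the netting of structured directions is carried out without loss, is of order $\nu_0\sqrt n = n^{-A}$, not $n^{-A-1}$. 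The union bound over the $n$ rows therefore costs an extra factor of $n$ that is genuinely lost: for every $\gamma$ in the nonempty range $\bigl[\tfrac12,\ \tfrac12+\tfrac{1}{2(A+1)}\bigr)$ the crude reduction yields only $n^{-A+1+o(1)}$, short of the stated $n^{-A+o(1)}$. The standard remedy is the Rudelson--Vershynin averaging device that this paper itself invokes (the ``Invertibility via Distance'' lemma): for incompressible approximate-kernel vectors one gets $\P\bigl(\inf_{z\in\mathrm{Incomp}}\|M_n z\|_2 < \eps\nu_2 n^{-1/2}\bigr)\le\frac{1}{\nu_1 n}\sum_k\P\bigl(\dist(X_k,H_k)<\eps\bigr)$, an average rather than a sum, so the $n$-fold loss disappears. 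Porting this (or an equivalent device) to the bounded-second-moment setting --- where the subgaussian tail used for the compressible-vector estimate is unavailable --- is precisely the nontrivial part of the Tao--Vu argument; it cannot be bypassed by the naive row union bound you use, so this step of your sketch needs to be replaced before the rest (the inverse Littlewood--Offord net, which you correctly flag as the hard analytic core) can close the argument.
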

Furthermore, they showed that unlike the gaussian case, the bound necessarily requires conditions on $M$.  
In \cite{rudelson2008littlewood}, Rudelson and Vershynin obtained the optimal rate for subgaussian random variables and $M = 0$.

\begin{theorem}
	Let $\xi_1, \dots, \xi_n$ be independment random variables with mean zero, variance at least 1, and subgaussian moments bounded by $B$.  Let $N_n$ be a $n \times n$ random matrix whose rows are independent copies of the random vector $(\xi_1, \dots, \xi_n)$.  Then for every $\eps \geq 0$ one has
	$$
	\P(s_n(N_n) \leq \eps n^{-1/2}) \leq C \eps + c^n
	$$ 
	where $C > 0$ and $c \in (0,1)$ depend only on $B$.
\end{theorem}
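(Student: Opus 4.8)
The plan is to follow the two-scale ``compressible versus incompressible'' strategy on the sphere, combined with the inverse Littlewood--Offord / least-common-denominator machinery to manufacture the exponentially small error term.

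\emph{Step 1: Reduction to an invertibility estimate over the sphere.} Since $s_n(N_n)=\inf_{x\in S^{n-1}}\|N_nx\|_2$, I would fix small constants $\delta,\rho\in(0,1)$, depending only on $B$, and decompose $S^{n-1}$ into the set $\mathrm{Comp}(\delta,\rho)$ of \emph{compressible} vectors --- those within Euclidean distance $\rho$ of a vector supported on at most $\delta n$ coordinates --- and the complementary set $\mathrm{Incomp}(\delta,\rho)$ of \emph{incompressible} vectors. Then
$$
\{s_n(N_n)\le\eps n^{-1/2}\}\subseteq\Big\{\inf_{x\in\mathrm{Comp}(\delta,\rho)}\|N_nx\|_2\le\eps n^{-1/2}\Big\}\cup\Big\{\inf_{x\in\mathrm{Incomp}(\delta,\rho)}\|N_nx\|_2\le\eps n^{-1/2}\Big\},
$$
and the two events are estimated separately. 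One may assume $\eps\le c_0$ for a suitable small constant, since the claimed bound is vacuous otherwise.

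\emph{Step 2: The compressible part.} For a fixed unit vector $x$, the coordinates of $N_nx$ are independent, each with variance $\ge1$ and subgaussian norm $O(B)$, so a tensorization of small-ball probabilities gives $\P(\|N_nx\|_2\le c_1\sqrt n)\le e^{-c_2 n}$ with constants depending only on $B$. The set $\mathrm{Comp}(\delta,\rho)$ admits a $\rho$-net of cardinality at most $\binom{n}{\delta n}(C/\rho)^{\delta n}=e^{\kappa(\delta)n}$ with $\kappa(\delta)\to0$ as $\delta\to0$. A union bound over this net, together with the standard operator-norm estimate $\P(\|N_n\|\ge C_0\sqrt n)\le e^{-c_3 n}$ and an approximation step, then yields, after choosing $\delta,\rho$ small,
$$
\P\Big(\inf_{x\in\mathrm{Comp}(\delta,\rho)}\|N_nx\|_2\le c_1\sqrt n/2\Big)\le c^n,
$$
so the compressible event has probability at most $c^n$.

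\emph{Step 3: The incompressible part, reduced to a distance problem.} Here I would invoke the Rudelson--Vershynin ``invertibility via distance'' principle: with $X_1,\dots,X_n$ the columns of $N_n$ and $H_k:=\Span(X_j:j\ne k)$,
$$
\P\Big(\inf_{x\in\mathrm{Incomp}(\delta,\rho)}\|N_nx\|_2\le\frac{\eps\rho}{\sqrt n}\Big)\le\frac{1}{\delta n}\sum_{k=1}^n\P\big(\dist(X_k,H_k)\le\eps\big).
$$
It therefore suffices to prove $\P(\dist(X_k,H_k)\le\eps)\le C\eps+c^n$ for each $k$. Fixing $k$ and conditioning on $(X_j)_{j\ne k}$, let $v$ be a unit vector normal to $H_k$ (generically unique up to sign); then $\dist(X_k,H_k)=|\langle X_k,v\rangle|$ and $X_k$ is independent of $v$.

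\emph{Step 4: Small-ball probability for $\langle X_k,v\rangle$ --- the main obstacle.} First one shows $v\in\mathrm{Incomp}(\delta',\rho')$ off an event of probability $c^n$: a compressible $v$ would be nearly supported on some $I$ with $|I|\le\delta'n$, and the relations $\langle X_j,v\rangle=0$ ($j\ne k$) would force $v|_I$ to be an approximate null vector of the tall $(n-1)\times|I|$ random matrix $(X_j|_I)_{j\ne k}$, whose least singular value is $\gtrsim\sqrt n$ except with probability $c^n$ (a net argument on the small sphere), a contradiction. Granting incompressibility, I would bring in inverse Littlewood--Offord theory: define the essential least common denominator $D(v)$ and split $\mathrm{Incomp}$ into the unstructured part $\{D(v)\ge e^{cn}\}$ and the structured part $\{D(v)<e^{cn}\}$. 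On the unstructured part an Esseen-type inequality gives $\P(|\langle X_k,v\rangle|\le\eps)\le C\eps+C/D(v)\le C\eps+c^n$. On the structured part, the inverse theorem covers the relevant vectors by a net of size $e^{c'n}$ with $c'<c$, and the event that the normal vector $v$ lies in this net forces $(X_j)_{j\ne k}$ to annihilate a \emph{fixed} structured vector, an event of probability $e^{-\Omega(n)}$; a union bound then disposes of it. Assembling the pieces gives $\P(\dist(X_k,H_k)\le\eps)\le C\eps+c^n$, hence $\P(\inf_{x\in\mathrm{Incomp}(\delta,\rho)}\|N_nx\|_2\le\eps n^{-1/2})\le C\eps+c^n$ after rescaling $\eps$, and combining with Step 2 yields $\P(s_n(N_n)\le\eps n^{-1/2})\le C\eps+c^n$. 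The genuinely hard step is Step 4: one must control the arithmetic structure of the \emph{random} normal vector $v$ while converting incompressibility into a small-ball estimate with an \emph{exponentially} small additive error, which is precisely where the inverse Littlewood--Offord theory and the counting of structured vectors carry the argument.
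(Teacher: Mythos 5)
Your outline follows the Rudelson--Vershynin argument that the paper cites for this theorem and adapts to the complex, shifted setting in Section~5: the compressible/incompressible decomposition, invertibility via distance, incompressibility of the random normal vector, and an LCD-based small-ball bound combined with a net argument over structured vectors. One imprecision in Step~4 is worth flagging: the set $\{D(v)<e^{cn}\}$ does \emph{not} admit a net of size $e^{c'n}$ with $c'<c$ --- a level set $S_D=\{D\le D(v)\le 2D\}$ has a net of cardinality roughly $(CD/\sqrt n)^n$, which is super-exponential when $D$ approaches $e^{cn}$; the union bound nonetheless closes because the per-point small-ball probability for a net point $y$ is of order $(C\alpha/D)^{n-1}$ (tensorized over the $n-1$ independent rows), so that after setting $\alpha=\beta\sqrt n$ the product over the net is of order $(C\beta)^{n}\,D/\sqrt n$, which is exponentially small uniformly over dyadic $D\le e^{cn}$ once $\beta$ is chosen sufficiently small relative to $c$.
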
 
For the complex case, Edelman's work \cite{edelman1988eigenvalues} implies the following.
\begin{theorem}
	For $\zeta$ a complex gaussian and $N_n$ a random $n \times n$ matrix populated with iid entries $\zeta$, then for all $\eps \geq 0$
	$$
	\P(s_n(N_n) \leq \eps n^{-1/2}) \leq \eps^2.
	$$
\end{theorem}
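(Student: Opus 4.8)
The plan is to read the conclusion off the exact joint law of the singular values, which is available here because $N_n^*N_n$ is a square complex Wishart matrix. Normalize $\zeta$ so that $\E|\zeta|^2 = 1$ (the other standard normalizations only rescale $\eps$ and make the bound stronger), and let $\lambda_1 \geq \dots \geq \lambda_n > 0$ denote the eigenvalues of $N_n^*N_n$, so that $\lambda_j = s_j(N_n)^2$. The computation of \cite{edelman1988eigenvalues} gives that, almost surely, the $\lambda_j$ are distinct and positive, with joint density
$$
\rho_n(\lambda_1,\dots,\lambda_n) \;=\; c_n \prod_{1 \leq i < j \leq n} |\lambda_i - \lambda_j|^2 \prod_{k=1}^n e^{-\lambda_k}
$$
on the open orthant $(0,\infty)^n$, where $c_n$ is a normalizing constant whose value we will not need.

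First I would reduce the claim to a tail bound on $\lambda_{\min} := \lambda_n$. Indeed $\{s_n(N_n) \leq \eps n^{-1/2}\} = \{\lambda_{\min} \leq \eps^2/n\}$, so it suffices to show
$$
\P(\lambda_{\min} > t) = e^{-nt} \qquad \text{for all } t \geq 0,
$$
since then $\P(s_n(N_n) \leq \eps n^{-1/2}) = 1 - e^{-\eps^2} \leq \eps^2$.

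The one idea needed is that the Vandermonde factor $\prod_{i<j}|\lambda_i - \lambda_j|^2$ is invariant under the simultaneous shift $\lambda_k \mapsto \lambda_k + t$. Since $\{\lambda_{\min} > t\}$ is exactly the event that every $\lambda_k$ exceeds $t$, the change of variables $\lambda_k = u_k + t$ in the integral of $\rho_n$ over this region gives
$$
\P(\lambda_{\min} > t) = c_n \int_{(0,\infty)^n} \prod_{i<j}|u_i - u_j|^2 \prod_{k=1}^n e^{-(u_k+t)}\,du_1\cdots du_n = e^{-nt} \int_{(0,\infty)^n} \rho_n(u)\,du_1\cdots du_n = e^{-nt},
$$
using only that $\rho_n$ integrates to $1$ on the orthant. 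This finishes the proof, and the constant $c_n$ is never identified.

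I do not foresee a genuine obstacle: the content is entirely in correctly invoking the square complex Wishart density in the right normalization, after which translation-invariance of the Vandermonde does everything. If one wished to avoid quoting the full $n$-variable density, the same shift argument applies after integrating out $\lambda_1,\dots,\lambda_{n-1}$, but the version above is cleanest. It is worth noting that the naive route — bounding $s_n(N_n) \geq n^{-1/2}\min_k \dist(R_k, \Span\{R_j : j \neq k\})$ for the rows $R_k$ and using that each such distance is a standard complex Gaussian length — only yields $\P(s_n(N_n) \leq \eps n^{-1/2}) \leq n\eps^2$, so the exact law really is being used to remove the factor of $n$.
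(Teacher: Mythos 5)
Your proof is correct. The paper does not give its own argument here — it simply cites Edelman's 1988 computation of the exact smallest-eigenvalue law for Gaussian matrices — and your derivation is a clean, self-contained reconstruction of exactly that fact, with the translation-invariance of the Vandermonde giving the exponential law $\P(\lambda_{\min}>t)=e^{-nt}$ without ever touching the normalizing constant. Two small points worth being explicit about: (i) the shift trick works precisely because the square case $m=n$ kills the $\lambda_k^{m-n}$ factor in the Laguerre density — for rectangular $m>n$ the argument fails, which is consistent with the fact that $\lambda_{\min}$ is then no longer exactly exponential; and (ii) in this paper's own normalization (``genuinely complex'' means $\E|\zeta|^2=2$, not $1$) the bound actually sharpens to $1-e^{-\eps^2/2}\le \eps^2/2$, so your remark that rescaling only helps is correct for the normalization the paper implicitly has in mind.
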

For more general complex random variables, Pan and Zhou \cite{pan2010circular}, modifying the argument of Rudelson and Vershynin \cite{rudelson2008littlewood}, showed
\begin{theorem}
	Let $\zeta$ be a complex random variables with mean zero, $\mathbb{E}|\zeta|^2 = 1$, and $\mathbb{E}|\zeta|^3 < B$.  Let $M$ be a fixed complex matrix and $N_n$ be a random matrix with iid entries $\zeta$ and define $M_n := M + N_n$.  There exists a $C >0$ and $c \in (0,1)$ such that for $K \geq 1$ and every $\eps > 0$,
	$$
	\P(s_n(M_n) \leq \eps n^{-1/2}) \leq C \eps + c^n + \P(\|M_n\| > K \sqrt n)
	$$
	where $C, c$ only depend on $K, B, \E (Re(\zeta))^2, \E(Im(\zeta))^2,$ and $\E Re(\zeta)Im(\zeta)$.
\end{theorem}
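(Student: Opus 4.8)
The plan is to follow the Rudelson--Vershynin strategy for the least singular value, adapted to the complex setting. Since $s_n(M_n) = \inf_{\|x\|_2 = 1}\|M_n x\|_2$ with $x$ ranging over the complex unit sphere $S^{n-1}_{\mathbb{C}} \subset \mathbb{C}^n$, the first step is to split this sphere into the set of \emph{compressible} vectors --- those within Euclidean distance $\rho$ of a vector supported on at most $\delta n$ coordinates --- and its complement, the \emph{incompressible} vectors, for suitable small constants $\rho, \delta$ depending on $K$ and $B$. One then bounds $\P\big(\inf_{x\text{ compressible}}\|M_n x\|_2 \leq c_0\sqrt{n}\big)$ and $\P\big(\inf_{x\text{ incompressible}}\|M_n x\|_2 \leq \eps n^{-1/2}\big)$ separately, always intersecting with the event $\|M_n\|\leq K\sqrt{n}$; the complementary probability $\P(\|M_n\| > K\sqrt{n})$ is precisely the third term in the statement.

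For the compressible part: for a fixed unit vector $x$, the coordinates of $M_n x$ are the inner products $\langle R_k, x\rangle$ of $x$ with the rows $R_k$ of $M_n$, which are independent; each equals a deterministic shift (coming from $M$) plus $\sum_j \zeta_{kj}\bar{x}_j$. A one-dimensional small-ball (Lévy concentration) bound gives $\sup_u \P(|\langle R_k, x\rangle - u|\leq t)\leq C(t + \text{const})$, and tensorizing over the $n$ coordinates yields $\P(\|M_n x\|_2 \leq c_0\sqrt{n})\leq c_1^{\,n}$ for some $c_1\in(0,1)$. A standard $\eps$-net over compressible vectors has cardinality at most $(C/\rho)^{\delta n}\binom{n}{\delta n}$, which is $(C')^{n}$ with $C'$ close to $1$ once $\delta$ is small, and on the event $\|M_n\|\leq K\sqrt{n}$ one passes from the net to all compressible vectors with only bounded loss; choosing $\delta$ small enough beats $c_1^{-1}$ and produces a $c^n$ bound for this case.

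For the incompressible part: the invertibility-through-distance lemma shows that if $x$ is incompressible then $\|M_n x\|_2 \geq c_2 n^{-1/2}\min_k \dist(R_k, H_k)$ for a constant $c_2 = c_2(\delta,\rho)>0$, where $H_k = \Span\{R_j : j\neq k\}$; hence $\P\big(\inf_{x\text{ incompressible}}\|M_n x\|_2 \leq \eps n^{-1/2}\big)\leq \frac1n\sum_k \P(\dist(R_k,H_k)\leq C\eps)$. Fixing $k$ and a unit normal $v$ to $H_k$ (which is independent of $R_k$), one has $\dist(R_k,H_k) = |\langle R_k,v\rangle|$, and conditioning on $v$, $\langle R_k,v\rangle = \sum_j \zeta_{kj}\bar v_j + (\text{deterministic})$. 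Writing $\zeta = \xi + i\xi'$ and $v_j = p_j + iq_j$, the real and imaginary parts of this sum are two linear forms in the $2n$ independent real variables $\{\xi_{kj},\xi'_{kj}\}$ whose coefficient vectors $(p,q)$ and $(-q,p)$ are orthogonal in $\mathbb{R}^{2n}$ with equal norm $\|v\|_2 = 1$; the small-ball bound for such a genuinely two-real-dimensional linear form, together with the fact that $v$ is incompressible with probability at least $1 - c^n$, yields $\P(|\langle R_k,v\rangle|\leq t)\leq Ct + c^n$. Averaging over $k$ completes this case.

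The main obstacle is the last step, in two parts. First, one must prove the small-ball bound for $\sum_j \zeta_{kj}\bar v_j$ with incompressible $v$ and an \emph{exponentially small} additive error: here the hypothesis $\mathbb{E}|\zeta|^3 < B$ feeds a Berry--Esséen / Esséen-inequality comparison with the Gaussian, whose Lévy concentration function on incompressible coefficient vectors is controlled in terms of the $2\times 2$ covariance matrix of $(\operatorname{Re}\zeta, \operatorname{Im}\zeta)$ --- which is exactly why $C,c$ depend on $\E(\operatorname{Re}\zeta)^2$, $\E(\operatorname{Im}\zeta)^2$, and $\E\operatorname{Re}(\zeta)\operatorname{Im}(\zeta)$ --- and the exponential error arises from tensorizing over the incompressible support. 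Second, showing that the random normal vector $v$ to $H_k$ is incompressible with probability $1 - c^n$ is itself a least-singular-value-type estimate for the $(n-1)\times n$ matrix of the remaining rows, handled by a compressible-net argument entirely analogous to the one above, again using $\|M_n\|\leq K\sqrt{n}$. Once these two inputs are in place, assembling the pieces is routine.
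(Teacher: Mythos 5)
This statement is Pan--Zhou's theorem, which the paper cites as a prior result rather than proving; the closest in-paper analogue is the proof of Theorem~\ref{LeastSingular}. Your high-level outline matches both: the compressible/incompressible decomposition of $\mathcal{S}_\C^{n-1}$, the tensorization-plus-net argument on compressibles, and the reduction via the invertibility-through-distance lemma to a small-ball bound for $|\langle R_k, v\rangle|$ with $v$ the random normal to $H_k$.

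The genuine gap is in your account of where the additive $c^n$ term comes from. You write that ``the exponential error arises from tensorizing over the incompressible support,'' but there is nothing to tensorize at that point: the object in question is the L\'evy concentration of a \emph{single scalar} $|\langle R_k, v\rangle|$, not a vector norm. Tensorization only enters in the compressible case (bounding $\P(\|M_n x\|_2 \le \eps\sqrt n)$ for a fixed $x$). For the incompressible case, Berry--Ess\'een with incompressibility of $v$ gives a bound of the shape $Ct + O(n^{-1/2})$ --- a \emph{polynomially} small additive error --- because incompressibility controls only the spread part of $v$, not its arithmetic structure. To push the error down to $c^n$, one needs the Rudelson--Vershynin arithmetic-structure machinery (essential least common denominator): a small-ball theorem of the form $\mathcal{L}(|\langle R_k, v\rangle|, t) \le Ct + C/\mathrm{LCD}(v) + C\exp(-c\alpha^2)$, together with a structural theorem (proved by an $\eps$-net over level sets of $\mathrm{LCD}$, using $\|M_n\| \le K\sqrt n$) that the random normal $v$ satisfies $\mathrm{LCD}(v) \ge e^{cn}$ except with probability $e^{-cn}$. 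This is precisely what the paper does for its own Theorem~\ref{LeastSingular} (Lemmas~\ref{lowerboundLCD}, \ref{net}, \ref{randomnormal}, and Theorem~\ref{smallball}) and what Pan--Zhou do, adapting Rudelson--Vershynin. Your proposal identifies the right one-dimensional Berry--Ess\'een input and correctly notes where the covariance of $(\Re\zeta,\Im\zeta)$ enters, but omits the LCD analysis entirely; as written, the argument would not yield the exponentially small error term in the statement.
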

Our work will improve this rate for the case when the real and imaginary components of the random variable are independent. 

\section{Main Results}
We address the question of the existence of real eigenvalues for a general class of complex random variables whose real and imaginary parts are independent.

\begin{definition}
We say that a random variable $\zeta$ is \emph{genuinely complex with moment $B$} if $\zeta = \xi + i \xi'$ where $\xi$ and $\xi'$ are iid, mean zero, variance 1 and subgaussian with moment $B$.  
\end{definition}

\begin{theorem}\label{MainNoReal}
	Let $N_n$ be a $n \times n$ random matrix populated with independent copies of a random variable that is genuinely complex with moment $B$.  Then there exists a $c_{\ref{MainNoReal}} \in (0,1)$ only depending on $B$ such that
	$$
	\P(N_n \text{ has a real eigenvalue}) \leq c_{\ref{MainNoReal}}^n.
	$$
\end{theorem}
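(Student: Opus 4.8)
The plan is to deduce Theorem~\ref{MainNoReal} from an optimal smoothed least-singular-value bound for the matrices $M_n = M + N_n$ (with $\|M\| \le Kn^{1/2}$), precisely the ``principal component'' advertised in the abstract. First I would set up the reduction: a real number $\lambda \in \R$ is an eigenvalue of $N_n$ precisely when $N_n - \lambda I$ is singular, i.e. $s_n(N_n - \lambda I) = 0$. The idea is to discretize $\lambda$ over a fine net and run a union bound, using the least singular value tail to control each $s_n(N_n - \lambda I)$. To make this work I would first condition on the high-probability event $\|N_n\| \le K'\sqrt n$ (via the standard subgaussian operator-norm bound), which both restricts the relevant range of $\lambda$ to an interval of length $O(\sqrt n)$ and supplies the hypothesis $\|M\| = \|{-\lambda I}\| \le K'\sqrt n$ needed to invoke the smoothed bound with $M = -\lambda I$. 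On that event, $N_n$ has a real eigenvalue only if there is $\lambda$ in that $O(\sqrt n)$-interval with $s_n(N_n - \lambda I) = 0$.

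The core of the argument is the quantitative net step. Take a net $\mathcal{L}$ of the interval $[-K'\sqrt n, K'\sqrt n]$ with spacing $\delta$; its size is $O(\sqrt n/\delta)$. If $\lambda_0$ is a true real eigenvalue and $\lambda \in \mathcal{L}$ is within $\delta$ of it, then by the Lipschitz property of singular values $s_n(N_n - \lambda I) \le s_n(N_n - \lambda_0 I) + |\lambda - \lambda_0|\, \|I\| \le \delta$. So it suffices to bound $\P\big(\exists \lambda \in \mathcal{L} : s_n(N_n - \lambda I) \le \delta\big)$. For each fixed $\lambda$, the least singular value bound (applied with $M = -\lambda I$, $\eps = \delta\sqrt n$) gives $\P(s_n(N_n - \lambda I) \le \delta) \le C\delta\sqrt n + c^n$. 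Union-bounding over $\mathcal{L}$ yields $O(\sqrt n/\delta)\cdot(C\delta\sqrt n + c^n) = O(C n) + O(\sqrt n\, c^n/\delta)$. This is where the tension lies: the first term is not exponentially small. The resolution is that the exponential $c^n$ in the singular value bound comes with room to spare — the bound really holds at scale $\eps = e^{-c_1 n}$ with probability $\le e^{-c_2 n}$ for suitable constants — so choosing $\delta = e^{-c_1 n}/\sqrt n$ makes the net contribution $O(\sqrt n \cdot e^{c_1 n} \cdot e^{-c_1 n}) \cdot (\text{per-point prob})$, and the per-point probability is $\le C e^{-c_1 n}\sqrt n \cdot (\sqrt n) + c^n \le e^{-c_3 n}$; the net has size $e^{c_1 n}\cdot n$, so the product is still $\le c_{\ref{MainNoReal}}^n$ after adjusting constants. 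Thus I would first establish the least singular value theorem in the strong form ``$\P(s_n(M_n) \le e^{-cn}) \le e^{-cn}$'' and then feed it into this net argument.

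The main obstacle, and the bulk of the paper's work, is proving that strengthened least-singular-value bound for $M_n = M + N_n$ with genuinely complex entries and $\|M\| \le Kn^{1/2}$. The standard Rudelson--Vershynin machinery splits unit vectors $v \in \C^n$ into \emph{compressible} and \emph{incompressible} classes: for compressible $v$ one uses a net argument together with the operator-norm bound on $M_n$, which already gives an exponentially small probability, so the $\|M\|\le Kn^{1/2}$ hypothesis is exactly what keeps the net cardinality under control here. For incompressible $v$ one reduces, via the distance-to-hyperplane lemma, to a small-ball estimate: the probability that a single random linear form $\sum_j \zeta_j v_j$ (shifted by a deterministic constant coming from $M$) lands in a small disc. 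This is where the genuinely-complex hypothesis is essential and where the paper develops ``an optimal small-ball probability bound for complex random variables'' generalizing Littlewood--Offord/inverse-Littlewood--Offord theory: because $\zeta = \xi + i\xi'$ has independent coordinates, the real and imaginary parts of $\sum_j \zeta_j v_j$ are nearly-independent real linear forms whose coefficients cannot simultaneously be concentrated on a short arithmetic progression unless $v$ itself is structured, which incompressibility forbids. I would therefore (i) prove the complex small-ball / inverse-Littlewood--Offord estimate, (ii) assemble the compressible and incompressible cases into the statement $\P(s_n(M_n) \le \eps n^{-1/2}) \le C\eps + c^n$ and its exponential-scale corollary, and (iii) run the net-over-$\lambda$ argument above. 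The genuinely complex condition buys us an extra real degree of freedom per entry compared to the real case, which is morally why real eigenvalues become exponentially unlikely rather than $\Theta(\sqrt n)$ in number.
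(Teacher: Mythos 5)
Your high-level plan is exactly the paper's: condition on the operator-norm event $\|N_n\|\le K'\sqrt n$ so that every real eigenvalue lies in $[-K\sqrt n,K\sqrt n]$, discretize that interval, apply the smoothed least-singular-value bound with $M=-\lambda I$ at each net point, and union bound. The decomposition of the LSV proof (compressible/incompressible, distance lemma, complex Littlewood--Offord via LCD) is also the paper's machinery.

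However, there is a genuine gap in the quantitative step: you state the key least-singular-value bound as $\P(s_n(M_n)\le\eps n^{-1/2})\le C\eps+c^n$, which is the Rudelson--Vershynin/Pan--Zhou rate, whereas the paper's Theorem~\ref{LeastSingular} proves the strictly stronger \emph{quadratic} rate $C\eps^2+c^n$, and that improvement is not optional. With the linear rate the net-and-union argument cannot close: a net of spacing $\delta$ over $[-K\sqrt n,K\sqrt n]$ has $\sim\sqrt n/\delta$ points, each surviving with probability $\sim C\delta\sqrt n+c^n$, so the union bound gives $\sim Cn+\sqrt n c^n/\delta$, which is $\Omega(n)$ \emph{no matter how small $\delta$ is} because the two factors cancel the $\delta$-dependence in the main term. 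Your proposed fix (work at scale $\eps=e^{-c_1n}$ where the per-point probability is $\le e^{-c_3n}$) does not resolve this: the per-point bound forces $c_3\le c_1$ up to negligible corrections, but the net of size $\sim ne^{c_1n}$ requires $c_3>c_1$ for the product to be exponentially small, so the inequalities are incompatible. The extra factor of $\eps$ in $C\eps^2$ is what breaks the tie: the union bound becomes $(\sqrt n/\delta)(C\delta^2 n+c^n)=C\delta n^{3/2}+\sqrt n c^n/\delta$, which is exponentially small on choosing $\delta\sim c_0^n$ with $c<c_0<1$ (equivalently, the paper's $\eps=c_{\ref{MainNoReal}}^n$ with $c_{\ref{MainNoReal}}\in(c_{\ref{LeastSingular}},1)$). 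The $\eps^2$ comes from the two-dimensional small-ball estimate (Theorem~\ref{smallball}): mass in a complex disc of radius $\eps$ scales like $\eps^2$, which is exactly where the genuinely-complex hypothesis earns its keep and is the paper's principal technical contribution over Pan--Zhou. Without identifying this improved rate, the reduction you describe does not yield an exponentially small bound.
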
  

\begin{remark}
	This is best possible up to the value of the constant $c_{\ref{MainNoReal}}$.  For example, for $\pm 1 \pm i$ random variables, the probability of having zero as an eigenvalue is lower bounded by the probability that there exists two rows or columns with the same entries.  The probability of the latter is $(1+o(1))n^2 4^{-n}$.
\end{remark}

The crucial ingredient in the proof is a new result on the smoothed analysis of the least singular value for such complex matrices.
\begin{theorem}\label{LeastSingular}
	Let $N_n$ be as in Theorem \ref{MainNoReal} and let $K > 0$ be a constant.  There exists constants $C_{\ref{LeastSingular}}, c_{\ref{LeastSingular}}>0$ only depending on $B, K$ such that for $M$, a fixed complex matrix with $\|M\| \leq K \sqrt n$, $M_n := M + N_n$, and for all $\eps \geq 0$
	$$
	\P(s_n(M_n) \leq \eps n^{-1/2}) \leq C_{\ref{LeastSingular}} \eps^2 + c_{\ref{LeastSingular}}^n.
	$$    
\end{theorem}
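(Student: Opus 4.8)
The plan is to follow the now-standard net-plus-small-ball strategy of Rudelson--Vershynin, but with two key modifications to reach the optimal $\eps^2$ rate: (i) a small-ball inequality for complex linear combinations that gains a genuine \emph{square} saving because the real and imaginary parts of $\zeta$ are independent, and (ii) a decomposition of the unit sphere in $\C^n$ into compressible and incompressible vectors that respects the complex structure. Write $M_n = M + N_n$ and recall $s_n(M_n) = \inf_{\|x\|=1,\,x\in\C^n}\|M_n x\|$. First I would dispose of the compressible directions: if $x$ is within distance $\rho$ (a small constant) of a $\delta n$-sparse complex vector, then a standard $\eps$-net argument over the low-dimensional supports, combined with the operator-norm bound $\|M_n\|\le (K+O(1))\sqrt n$ with probability $1-c^n$ and a single-vector small-ball estimate, shows $\P(\inf_{x\text{ compressible}}\|M_nx\|\le c_0\sqrt n)\le c^n$; here the cost of the net is $e^{O(\delta n\log(1/\delta))}$, beaten by the $c^n$ gain per fixed vector, exactly as in \cite{rudelson2008littlewood}. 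The real-vs-complex bookkeeping only costs a harmless factor of $2$ in the entropy.

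The heart of the matter is the incompressible case. For incompressible $x$, a Chebyshev/averaging argument (the ``invertibility via distance'' lemma) reduces bounding $\|M_n x\|$ from below to bounding below $\dist(X_k, H_k)$, where $X_k$ is the $k$-th column of $M_n$ and $H_k$ is the span of the others; and that distance equals $|\langle Y, X_k\rangle|$ for a unit normal $Y$ to $H_k$, with $Y$ independent of $X_k$. Conditioning on $Y$, the quantity $\langle Y, X_k\rangle = \sum_j \bar Y_j (M_{jk} + \zeta_{jk})$ is a shift of a sum of independent complex random variables $\bar Y_j \zeta_{jk}$, so I need a small-ball bound: $\P(|\sum_j \bar Y_j \zeta_{jk} - z|\le t) \le (C t + \text{structure term})^2$ for $t$ of order $n^{-1/2}$ and all $z\in\C$. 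Because $\zeta_{jk} = \xi_{jk} + i\xi'_{jk}$ with $\xi,\xi'$ independent, writing $\bar Y_j = a_j - i b_j$ the real and imaginary parts of $\sum \bar Y_j\zeta_{jk}$ are $\sum(a_j\xi_{jk} + b_j\xi'_{jk})$ and $\sum(a_j\xi'_{jk} - b_j\xi_{jk})$; these are two \emph{independent} real linear combinations (they involve disjoint sets of the $2n$ real variables only after a rotation, but in any case their joint density factors appropriately once one checks the $2\times 2$ coefficient structure), so the complex small-ball probability is the product of two real small-ball probabilities, each governed by the real Littlewood--Offord / essential-LCD theory of Rudelson--Vershynin. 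This is where the optimal $\eps^2$ comes from, and I would isolate it as the ``complex inverse Littlewood--Offord'' statement promised in the abstract: either $Y$ has large essential LCD (in the complex sense, meaning the $\R^{2n}$ vector $(a,b)$ and its rotation $(-b,a)$ are both LCD-spread), giving $t$-order small-ball squared, or $Y$ lies in a small exceptional set of structured normals.

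The final step is to show the structured normals contribute only $c^n$. This is the usual tension: one shows that with probability $1-c^n$ over $N_n$, \emph{every} unit normal $Y$ to a random $H_k$ is unstructured (large complex LCD), by a union bound over a net of the structured set against the event that $H_k$ nearly contains such a $Y$; the net of the bad set has cardinality $e^{o(n)}$ while each fixed structured direction is avoided by the random column span with probability $1-c^n$ (again using the single-vector complex small-ball bound, now with the \emph{square} gain doing no harm). Assembling: outside an event of probability $c^n$, for every incompressible $x$ a positive fraction of the columns satisfy $\dist(X_k,H_k)\ge c_1$ unless we pay $C\eps^2$, and compressible $x$ are handled separately; combining via the averaging lemma gives $\P(s_n(M_n)\le \eps n^{-1/2}) \le C\eps^2 + c^n$, which is the claim. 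The main obstacle I anticipate is making the complex essential-LCD definition do its job cleanly: one must verify that the ``two independent real combinations'' picture survives the conditioning on $Y$ being a normal vector (so $Y$ is not independent of all of $M_n$, only of column $k$), and that the structured/unstructured dichotomy for the $\R^{2n}$-lift is stable under the rotation by $i$ — i.e., that requiring both $(a,b)$ and $(-b,a)$ to have large real LCD is the right notion and is preserved along the net. The tensorization of small-ball under this rotation, and the resulting entropy of the complex bad set, is the technically delicate part; everything else is a careful adaptation of \cite{rudelson2008littlewood, pan2010circular}.
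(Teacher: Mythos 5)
Your overall framework — sphere decomposition into compressible and incompressible vectors, reduction to distance via averaging, distance expressed as $|\langle Y, X_k\rangle|$ for a unit normal $Y$, and a net-plus-union-bound to rule out structured normals — matches the paper's proof. The gap is in the one step you correctly identify as the heart of the matter: how the $\eps^2$ arises.

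You claim that, writing $\bar Y_j = a_j - ib_j$, the real and imaginary parts of $\sum_j \bar Y_j\zeta_{jk}$, namely $\sum_j(a_j\xi_{jk}+b_j\xi'_{jk})$ and $\sum_j(a_j\xi'_{jk}-b_j\xi_{jk})$, are \emph{independent}, so that the complex small-ball probability factors as a product of two one-dimensional ones. This is false for non-Gaussian $\xi,\xi'$. The two coefficient vectors $(a,b)$ and $(-b,a)$ in $\R^{2n}$ are orthogonal, so the two sums are \emph{uncorrelated}, but uncorrelated linear functionals of non-Gaussian iid variables are not independent. (Concretely: $n=1$, $a=b=1/\sqrt2$, $\xi,\xi'$ Rademacher gives joint support $\{(\pm\sqrt2,0),(0,\pm\sqrt2)\}$, which is not a product.) Your parenthetical that ``their joint density factors appropriately'' or that a rotation makes the variables disjoint relies implicitly on rotation invariance of the law of $(\xi,\xi')$, which holds only in the Gaussian case. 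Without independence you can only bound $\P(|\mathrm{Re}|<\eps,\,|\mathrm{Im}|<\eps)$ by $\min$ of the two marginals, which gives $O(\eps)$, not $O(\eps^2)$ — losing exactly the improvement this theorem is about.

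What the paper does instead is a genuinely two-dimensional small-ball estimate. It encodes $v\in\C^n$ as the $2\times 2n$ real matrix $[v]$ (rows $(\Re v, -\Im v)$ and $(\Im v, \Re v)$), so that $|\langle v,X\rangle| = \|[v]\hat X\|_2$ with $\hat X\in\R^{2n}$, and then applies the \emph{matrix-argument} LCD and small-ball theorem of Rudelson--Vershynin (\cite[Theorem 3.3]{rudelson2009smallest}). That result is an Esseen/Hal\'asz-type bound for the $2$-dimensional random vector $[v]\boldsymbol{\xi}$, and the $\eps^2$ is the correct volume scaling of a $2$-dimensional Lévy concentration under the anti-concentration hypothesis $LCD_{\alpha,\gamma}(v)$ large; no factorization into independent real sums is used or needed. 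Relatedly, your proposed structure condition (``both $(a,b)$ and $(-b,a)$ have large real LCD'') is strictly weaker than the paper's $LCD_{\alpha,\gamma}(v)$, which requires $\dist([v]^T\theta,\Z^{2n})\ge\min(\gamma\|\theta\|,\alpha)$ for \emph{all} $\theta\in\R^2$ with small norm, i.e., for every real linear combination of the two lifted vectors — this strengthening is exactly what the 2D Esseen argument demands. To repair your proof, replace the independence/factorization step with the 2D matrix-LCD small-ball theorem; the rest of your outline then goes through as you describe.
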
 
\begin{remark}
	Edelman's result \cite{edelman1988eigenvalues} shows that $C \eps^2$ is optimal up to the constant $C$.  Setting $\eps = 0$ and considering $\pm 1$ random variables, we recover the complex analogue of the Kahn, Koml{\'o}s, and Szemeredi \cite{kahn1995probability} result that $\pm 1$ random matrices are singular with exponentially small probability. Thus, the $c^n$ term is optimal for random sign matrices.
\end{remark}
\section{Notation}

It will often be convenient to convert a problem from the complex setting to the real one.  For this purpose, we introduce the following notation.  For $v = (v_1, \dots, v_n)^T \in \C^n$ (all vectors are assumed to be column vectors), we let $\hat{v} := \left(\Re(v_1), \dots, \Re(v_n), \Im(v_1), \dots , \Im(v_n)\right)^T \in \R^{2n}$ where $\Re(v_j)$ and $\Im(v_j)$ are respectively the real and imaginary parts of the complex number $v_j$.  We will also need to convert $v$ into matrix form.  Let $[v] \in \R^{2 \times 2n}$ be defined as 
\begin{equation*}
 [v]:=  
 \left( \begin{array}{cc}
 \Re(v)^T & -\Im(v)^T  \\
 \Im(v)^T & \Re(v)^T \end{array} \right)
\end{equation*}         
where $\Re(v)$ indicates the vector whose entries are the real parts of the corresponding entries in $v$.  $\Im(v)$ is similarly defined (See Figure \ref{notation}).
\begin{figure}
\begin{center}
	\includegraphics[scale=.75]{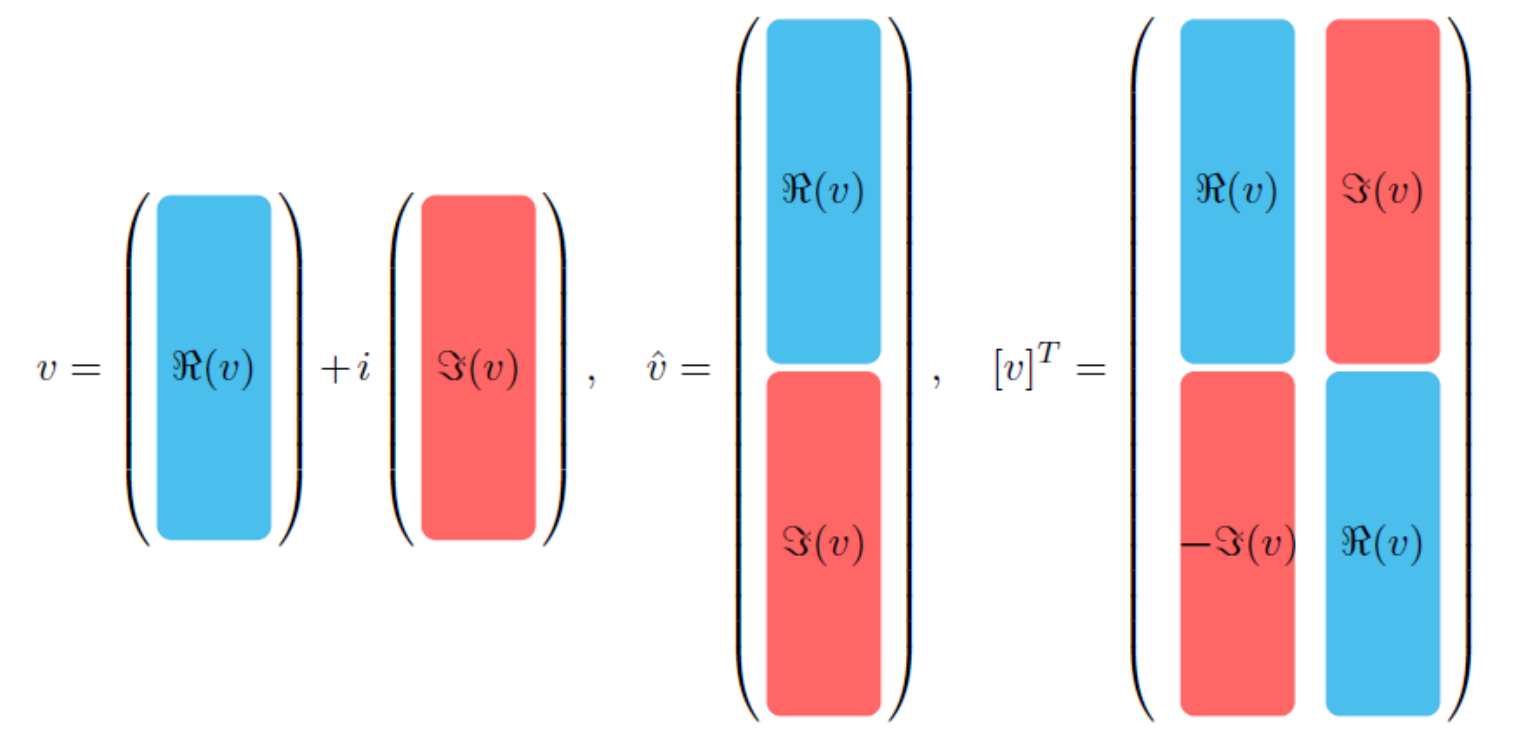} 
	\caption{Operations on $v$. (Note the added minus sign in the definition of $[v]$)}
	\label{notation}
\end{center}
\end{figure}

An important property is that for $a \in \C^n$, 
$$
\left|\sum_{j=1}^n a_j v_j \right| = |v^T a| = \| [v] \hat{a}\|_2 
$$

We use $\mathcal{S}_{\R}^{n-1}$ and $\mathcal{S}_{\C}^{n-1}$ to denote the unit sphere in $\R^n$ and $\C^n$ respectively.  For an $N \times N'$ real or complex matrix $M$, we denote the $\ell_2$ operator norm by $\|M\|$.  For $J \in [N]$ we denote by $M_J$ the $|J| \times N'$ matrix composed of the rows of $M$ indexed by $J$. For two vectors $v,v'$, let $v \cdot v'$ represent the standard dot product of the two.  $i$ will always mean $\sqrt{-1}$.  $\zeta$ will typically denote a complex random variable and $\xi$ a real one.  Additionally, $C$ with or without a subscript will typically denote a large constant that may depend on other parameters (e.g. $B$) and $c$ with or without a subscript will denote a small constant in $(0,1)$ that may also depend on other parameters (typically $B$).

\section{Proof of Theorem \ref{LeastSingular}}
The argument will be a modification of that used by Rudelson and Vershynin \cite{rudelson2008littlewood}.  We begin with a decomposition of the complex unit sphere.
\subsection{Decomposition of $\mathcal{S}_\C^{n-1}$}  
\begin{definition}
	Let $\delta, \rho \in (0,1)$ be two constants.  A vector $v \in \C^n$ is called sparse if $|supp(\hat{v})| \leq 2\delta n$.  A vector $v \in \mathcal{S}_{\C}^{n-1}$ is compressible if it is within Euclidean distance $\rho$ from the set of all sparse vectors.  A vector $v \in \mathcal{S}_{\C}^{n-1}$ is called incompressible if it is not compressible.  We will denote the set of sparse, compressible, and incompressible vectors by $Sparse(\delta), Comp(\delta, \rho), Incomp(\delta, \rho)$ respectively.
\end{definition}  

The least singular value problem can thus be divided into two subproblems.
$$
\P(s_n(M_n) \leq \eps n^{-1/2}) \leq \P(\inf_{x \in Comp(\delta, \rho)} \|M_n x\|_2 \leq \eps n^{-1/2}) + \P(\inf_{x \in Incomp(\delta, \rho)} \|M_n x\|_2 \leq \eps n^{-1/2}) .
$$
We exploit the different properties of compressible and incompressible vectors to solve the problem for each set in a distinct way.

\subsection{Compressible Vectors}
For compressible vectors, the bound is much stronger than we need and the argument is essentially the same as \cite{rudelson2008littlewood,pan2010circular}.    
\begin{lemma} \label{Compressible}
	For $M_n$ as in Theorem \ref{LeastSingular}, there exist $\delta, \rho, c_{\ref{Compressible}}, c'_{\ref{Compressible}} >0$ only depending on $B$ and $K$ such that
	\begin{equation} \label{CompressibleBound}
	\P(\inf_{v \in Comp(\delta, \rho)} \|M_n v\|_2 \leq c_{\ref{Compressible}} n^{1/2}) \leq \exp(-c'_{\ref{Compressible}} n).
	\end{equation}
\end{lemma}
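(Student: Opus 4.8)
The plan is to run the Rudelson--Vershynin compressible-vector argument \cite{rudelson2008littlewood,pan2010circular} almost verbatim, the only genuine adaptation being that ``sparse'' now refers to the real representation $\hat v\in\R^{2n}$ and that the deterministic shift $M$ has to be carried through an anti-concentration estimate. Split $Comp(\delta,\rho)$ into $Sparse(\delta)\cap\mathcal S_\C^{n-1}$ and the vectors lying within $\rho$ of it; handle the first by an $\eps$-net argument and deduce the second by perturbation off the sparse bound using the operator norm of $M_n$.

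\emph{A single-vector bound, uniform in $M$.} Fix a unit $v\in\mathcal S_\C^{n-1}$ and set $u:=Mv$, so that $(M_nv)_i=u_i+\sum_j (N_n)_{ij}v_j$. Each $(N_nv)_i$ is a sum of independent mean-zero terms, hence mean zero with $\E|(N_nv)_i|^2=2\|v\|_2^2=2$; being a sum of independent subgaussians of total subgaussian moment $O_B(\|v\|_2)=O_B(1)$, it also satisfies $\E|(N_nv)_i|^4\le C(B)$. A short expansion then gives $\E|(M_nv)_i|^2=|u_i|^2+2$ and $\E|(M_nv)_i|^4\le |u_i|^4+C(B)(|u_i|^2+|u_i|+1)$, so the Paley--Zygmund inequality applied to $X=|(M_nv)_i|^2$ with $\theta=1/2$ (checked separately in the regimes $|u_i|\ge1$ and $|u_i|<1$) produces a constant $p_1=p_1(B)>0$ with $\P(|(M_nv)_i|^2\ge1)\ge p_1$ for every $i$ and every such $v$ and $M$. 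Since the rows of $N_n$ are independent, $\|M_nv\|_2^2\ge\sum_{i=1}^n\mathbf 1\{|(M_nv)_i|^2\ge1\}$ stochastically dominates $\mathrm{Bin}(n,p_1)$, and a Chernoff bound gives
$$
\P\big(\|M_nv\|_2\le c_1\sqrt n\big)\le e^{-c_2 n},\qquad c_1:=\sqrt{p_1/2},\quad c_2:=p_1/8 .
$$

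\emph{Net for sparse vectors, then passage to compressible.} Writing $N_n$ as $A+iB$ with $A,B$ real subgaussian matrices, the standard operator-norm bound gives $\|M_n\|\le\|M\|+\|N_n\|\le(K+C_0)\sqrt n=:C_1\sqrt n$ off an event of probability $\le e^{-c_3 n}$. A unit sparse vector has $\hat v$ supported on at most $2\delta n$ of the $2n$ coordinates, so $Sparse(\delta)\cap\mathcal S_\C^{n-1}$ admits an $\eps_0$-net $\mathcal N$ with $|\mathcal N|\le\binom{2n}{2\delta n}(3/\eps_0)^{2\delta n}\le\exp\!\big(2\delta n\log(3e/(\delta\eps_0))\big)$. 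Put $\eps_0:=c_1/(2C_1)$; union-bounding the previous display over $\mathcal N$ and intersecting with the operator-norm event shows that, off probability $|\mathcal N|e^{-c_2n}+e^{-c_3n}$, every $v\in Sparse(\delta)$ satisfies $\|M_nv\|_2\ge c_1\sqrt n-C_1\sqrt n\,\eps_0=c_1\sqrt n/2$. Choosing $\delta=\delta(B,K)$ small enough that $2\delta\log(3e/(\delta\eps_0))<c_2/2$ makes the total failure probability $\le e^{-c_4 n}$. Finally, if $v\in Comp(\delta,\rho)$ and $w$ is a sparse vector with $\|v-w\|_2\le\rho$, then $\|w\|_2\ge1-\rho$ and $w/\|w\|_2\in Sparse(\delta)\cap\mathcal S_\C^{n-1}$, so $\|M_nv\|_2\ge\|M_nw\|_2-\|M_n\|\rho\ge(1-\rho)c_1\sqrt n/2-C_1\sqrt n\,\rho$; taking $\rho=\rho(B,K)$ small makes this $\ge c_1\sqrt n/4$, which proves \eqref{CompressibleBound} with $c_{\ref{Compressible}}=c_1/4$ and $c'_{\ref{Compressible}}=\min(c_3,c_4)/2$.

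\emph{Main obstacle.} None of this is deep --- as the authors note, it is ``essentially the same as'' prior work --- but the one point that is not automatic is keeping the per-coordinate anti-concentration $\P(|(M_nv)_i|\ge1)\ge p_1(B)$ \emph{uniform over the deterministic shift}, since a single coordinate of $Mv$ may be as large as $K\sqrt n$; this is what forces the two-regime Paley--Zygmund argument together with the fourth-moment (hypercontractivity) bound for sums of subgaussians, rather than a naive single-entry estimate. Everything else is bookkeeping, carried out in the order: first $B$ fixes $(p_1,c_1,c_2)$, then $C_1$, then $\eps_0$, then $\delta$, and finally $\rho$.
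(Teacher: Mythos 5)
Your proposal is correct and follows the same architecture as the paper's appendix proof: a per-coordinate anti-concentration bound, tensorized into a single-vector lower bound $\P(\|M_nv\|_2\le c_1\sqrt n)\le e^{-c_2n}$, then an $\eps$-net union bound over the sparse unit vectors, then a perturbation from sparse to compressible using the operator-norm estimate. The one genuine difference is in how the per-coordinate step is done. The paper cites Pan--Zhou's anti-concentration estimate ($\P(|S|<1/2)\le\mu$ for a real linear form with arbitrary deterministic shift, requiring only bounded fourth moments) and then invokes Rudelson--Vershynin's tensorization lemma; you instead work directly with the complex coordinate $(M_nv)_i=u_i+(N_nv)_i$ and prove the uniform-in-$u_i$ lower bound $\P(|(M_nv)_i|\ge1)\ge p_1(B)$ from scratch by Paley--Zygmund on $|(M_nv)_i|^2$, using the independence of $\Re\zeta$ and $\Im\zeta$ to compute the second and fourth moments, and then replace tensorization with a Chernoff bound on the count of ``large'' coordinates. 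This buys self-containment (no black-box lemma to cite) at the cost of a somewhat longer moment computation; the paper's route is shorter because it imports the single-vector result wholesale. Your bookkeeping of constants ($B\mapsto p_1,c_1,c_2$; then $C_1$; then $\eps_0$; then $\delta$; then $\rho$) is consistent with the lemma's requirement that $\delta,\rho,c_{\ref{Compressible}},c'_{\ref{Compressible}}$ depend only on $B$ and $K$.
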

\begin{proof}
	See \cite[Section 2.2]{pan2010circular}.
\end{proof}

\subsection{Incompressible Vectors}
For the remainder of the proof we fix a $\delta$ and $\rho$ such that (\ref{CompressibleBound}) holds.
For incompressible vectors, we leverage the fact that they have many coordinates of roughly the same size.  
\begin{lemma} \cite[Lemma 3.4]{rudelson2008littlewood} \label{spread}
	Let $z \in Incomp(\delta, \rho)$.  Then there exists a set $\sigma \subseteq \{1, \dots 2n\}$ of cardinality $|\sigma| \geq \nu_1 n$ and such that
	\begin{align*}\label{spread}
	\frac{\nu_2}{\sqrt n} &\leq |\hat{z}_k| \leq \frac{\nu_3}{\sqrt{n}} && \text{ for all } k \in \sigma 
	\end{align*}	
	where $0 < \nu_1, \nu_2, \nu_3$ are constants depending only on $\delta$ and $\rho$.  $\sigma$ is known as the \emph{spread part} of the vector $z$.
\end{lemma}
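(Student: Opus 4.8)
The statement is a purely deterministic fact about incompressible vectors, and the plan is to argue by contrapositive: if a vector $z \in \mathcal{S}_{\C}^{n-1}$ fails to have a large set of coordinates of order $n^{-1/2}$ (in the real representation $\hat z \in \R^{2n}$), then $z$ must be close to a sparse vector, contradicting incompressibility. Concretely, fix parameters $\nu_2, \nu_3$ (to be chosen in terms of $\delta, \rho$) and split the index set $\{1,\dots,2n\}$ into the \emph{small} coordinates $\sigma_{\mathrm{small}} := \{k : |\hat z_k| < \nu_2/\sqrt n\}$, the \emph{large} coordinates $\sigma_{\mathrm{large}} := \{k : |\hat z_k| > \nu_3/\sqrt n\}$, and the \emph{spread} part $\sigma := \{k : \nu_2/\sqrt n \le |\hat z_k| \le \nu_3/\sqrt n\}$. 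The goal is to show $|\sigma| \ge \nu_1 n$.

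First I would bound the size of $\sigma_{\mathrm{large}}$: since $\sum_k \hat z_k^2 = \|z\|_2^2 = 1$, at most $n/\nu_3^2$ coordinates can exceed $\nu_3/\sqrt n$ in absolute value, so $|\sigma_{\mathrm{large}}| \le n/\nu_3^2$, which we make $\le \delta n$ by taking $\nu_3$ large enough (say $\nu_3^2 \ge 1/\delta$). Next I would bound the contribution of the small coordinates to the norm: zeroing out all coordinates in $\sigma_{\mathrm{small}}$ changes $\hat z$ by a vector of squared length at most $(2n)(\nu_2/\sqrt n)^2 = 2\nu_2^2$, so choosing $\nu_2$ small enough (say $\sqrt 2\,\nu_2 \le \rho/2$) makes this perturbation have norm $\le \rho/2$. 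Now consider the vector $w$ obtained from $\hat z$ by also zeroing the coordinates in $\sigma_{\mathrm{large}}$; then $w$ is supported on $\sigma$, and $\|\hat z - w\|_2 \le \|(\text{small part})\|_2 + \|(\text{large part})\|_2$. If $|\sigma|$ were small — say $|\sigma| < \nu_1 n$ with $\nu_1$ chosen so that $|\sigma \cup \sigma_{\mathrm{large}}| \le 2\delta n$ — then $w$ would be a sparse vector (support at most $2\delta n$), and the large part, being itself sparse, is negligible only if we are more careful. The cleaner route is: suppose $|\sigma| < \nu_1 n$; then the vector $u := \hat z$ restricted to $\sigma_{\mathrm{large}} \cup \sigma$ has support of size $\le n/\nu_3^2 + \nu_1 n \le 2\delta n$ (choosing $\nu_3, \nu_1$ appropriately), hence corresponds to a sparse vector in $\C^n$, and $\|\hat z - u\|_2 = \|(\text{small part})\|_2 \le \sqrt 2\,\nu_2 \le \rho$. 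This exhibits $z$ within distance $\rho$ of a sparse vector, i.e. $z \in Comp(\delta,\rho)$, contradicting $z \in Incomp(\delta,\rho)$. Therefore $|\sigma| \ge \nu_1 n$.

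The one subtlety to handle carefully is that the decomposition $\hat v = (\Re v_1, \dots, \Re v_n, \Im v_1, \dots, \Im v_n)$ means a "sparse vector" is defined by $|\mathrm{supp}(\hat v)| \le 2\delta n$, so when I zero out coordinates of $\hat z$ I must check that the resulting real vector is the image $\hat u$ of some genuine $u \in \C^n$ — which it automatically is, since any vector in $\R^{2n}$ is $\hat u$ for a unique $u \in \C^n$ — and that its support in the $\R^{2n}$ sense is what controls sparsity, which matches the definition verbatim. With that observation the argument is just the three elementary estimates above (counting large coordinates via the $\ell_2$ bound, bounding the small tail, and a union bound on support sizes), and the constants $\nu_1, \nu_2, \nu_3$ come out as explicit functions of $\delta$ and $\rho$; there is no probabilistic input and no real obstacle, so I would simply cite or reproduce the short argument from \cite[Lemma 3.4]{rudelson2008littlewood} adapted to the $\R^{2n}$ picture.
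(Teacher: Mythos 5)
Your proposal is correct, and it is essentially the standard Rudelson--Vershynin argument (the paper itself gives no proof and simply cites \cite[Lemma 3.4]{rudelson2008littlewood}, which your proof reproduces, transplanted to the $\R^{2n}$ picture). The three estimates you isolate --- counting large coordinates via $\sum_k \hat z_k^2 = 1$, bounding the small-coordinate tail by $\sqrt{2}\,\nu_2$, and the support count $|\sigma_{\mathrm{large}} \cup \sigma| \le n/\nu_3^2 + \nu_1 n$ --- are exactly what is needed, and your choices $\nu_3^2 \ge 1/\delta$, $\nu_1 \le \delta$, $\nu_2 \le \rho/\sqrt 2$ close the contrapositive cleanly. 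Your observation that zeroing out coordinates of $\hat z$ always produces $\hat u$ for a unique $u \in \C^n$, so the paper's definition of sparsity (support of $\hat u$ in $\R^{2n}$) applies verbatim, is the one point worth making explicit and you made it.
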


\subsubsection{Invertibility of Incompressible Vectors Via Distance}

\begin{lemma}[Invertibility via Distance]\cite[Lemma 5.6]{rudelson2014recent} \label{distance}
	Let $M_n$ be a complex random matrix.  Let $X_1, \dots, X_n$ denote the column vectors of $M_n$, and let $H_k$ denote the span of all the column vectors except the $k$-th.  Then for every $\delta, \rho \in (0,1)$ and every $\eps > 0$ one has
	$$
	\P(\inf_{z \in Incomp(\delta, \rho)} \|M_n z\|_2 < \eps \nu_2 n^{-1/2}) \leq \frac{1}{\nu_1 n} \sum_{k=1}^n \P(\dist(X_k, H_k) < \eps) 
	$$
	where $\nu_1, \nu_2$ are as in Lemma \ref{spread}.
\end{lemma}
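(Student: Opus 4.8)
The plan is to reduce the invertibility of $M_n$ on incompressible vectors to the distances $\dist(X_k,H_k)$ between the columns and the complementary column spans, and then finish with a first–moment (Markov) argument; this is the standard ``distance to hyperplanes'' reduction, adapted to the complex setting. There is no probabilistic input beyond Markov's inequality, so the proof is short once the geometry is set up correctly.

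First I would record the underlying deterministic inequality. For any fixed realization of $M_n$, any $z\in\C^n$, and any $k\in[n]$, write $M_n z = z_k X_k + \sum_{j\ne k} z_j X_j$ and observe that the tail sum lies in $H_k$; since $H_k$ is a complex subspace, translating by an element of $H_k$ and rescaling gives
$$
\|M_n z\|_2 \;\ge\; \dist(z_k X_k, H_k) \;=\; |z_k|\,\dist(X_k,H_k).
$$
Next I would bring in incompressibility via Lemma~\ref{spread}: for $z\in Incomp(\delta,\rho)$ with spread set $\sigma\subseteq\{1,\dots,2n\}$, each $k\in\sigma$ indexes the real or imaginary part of some complex coordinate $\ell\in[n]$, and since $|z_\ell|^2 = \hat z_\ell^2 + \hat z_{\ell+n}^2$ we get $|z_\ell|\ge |\hat z_k|\ge \nu_2 n^{-1/2}$. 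Collecting these complex coordinates produces a set $\sigma_\C\subseteq[n]$ with $|\sigma_\C|\ge \nu_1 n$ (possibly after replacing $\nu_1$ by $\nu_1/2$, since at worst two real indices in $\sigma$ collapse to one complex index) on which $|z_\ell|\ge\nu_2 n^{-1/2}$. Combined with the displayed bound, $\dist(X_\ell,H_\ell)\le \nu_2^{-1}\sqrt n\,\|M_n z\|_2$ for every $\ell\in\sigma_\C$.

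Then I would run the counting step. Let $\mathcal E := \{\inf_{z\in Incomp(\delta,\rho)}\|M_n z\|_2 < \eps\nu_2 n^{-1/2}\}$, and on $\mathcal E$ fix a witness $z_0$; by the previous paragraph every $\ell$ in its spread set satisfies $\dist(X_\ell,H_\ell)<\eps$. The crucial observation is that the random set $I := \{\ell\in[n] : \dist(X_\ell,H_\ell)<\eps\}$ depends on $M_n$ alone and contains $\sigma_\C(z_0)$, so $|I|\ge\nu_1 n$ on $\mathcal E$, i.e.\ $\P(\mathcal E)\le\P(|I|\ge\nu_1 n)$. By linearity of expectation $\E|I| = \sum_{\ell=1}^n \P(\dist(X_\ell,H_\ell)<\eps)$, so Markov's inequality yields
$$
\P(\mathcal E)\;\le\;\P(|I|\ge\nu_1 n)\;\le\;\frac{\E|I|}{\nu_1 n}\;=\;\frac{1}{\nu_1 n}\sum_{\ell=1}^n\P(\dist(X_\ell,H_\ell)<\eps),
$$
which is the claimed bound.

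The bulk of the work is bookkeeping rather than conceptual. The only place that needs genuine care is the transfer of the real spread structure of $\hat z$ supplied by Lemma~\ref{spread} into lower bounds on the moduli of the (genuinely complex) coordinates $z_\ell$, and checking that this costs at most a constant factor in $\nu_1$; the elementary inequality $|z_\ell|\ge\max(|\Re z_\ell|,|\Im z_\ell|)$ is what makes this step go through. Everything else — the pointwise lower bound by $|z_k|\,\dist(X_k,H_k)$ and the final Markov estimate — is routine.
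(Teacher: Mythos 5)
Your proof is correct and is the standard Rudelson--Vershynin ``invertibility via distance'' reduction. The paper itself does not reproduce the argument: it cites the corresponding lemma in Rudelson--Vershynin's survey and remarks only that ``a minor variation of the proof there gives the above result''; what you have written out is essentially that minor variation, adapted to the complex setting. The deterministic step (writing $M_n z = z_k X_k + h$ with $h\in H_k$, using that $H_k$ is a complex subspace so $\dist(z_k X_k, H_k)=|z_k|\dist(X_k,H_k)$, and bounding $\|M_n z\|_2 \ge \dist(M_n z, H_k)$ since $0\in H_k$), the transfer of the real spread structure of $\hat z$ to lower bounds on the moduli $|z_\ell|$ via $|z_\ell|\ge\max(|\Re z_\ell|,|\Im z_\ell|)$, and the first-moment Markov argument on the random set $I=\{\ell: \dist(X_\ell,H_\ell)<\eps\}$ are all exactly the right moves.

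One small discrepancy, which you flag yourself: projecting the spread set $\sigma\subseteq\{1,\dots,2n\}$ of Lemma~\ref{spread} (size $\ge\nu_1 n$) down to complex indices in $[n]$ can collapse pairs, yielding only $|\sigma_\C|\ge\nu_1 n/2$. Your argument therefore produces the bound $\frac{2}{\nu_1 n}\sum_k\P(\dist(X_k,H_k)<\eps)$ rather than $\frac{1}{\nu_1 n}\sum_k\P(\dist(X_k,H_k)<\eps)$ as stated. This factor of $2$ is immaterial for the application and is arguably an imprecision in the lemma's statement rather than a gap in your reasoning; if one wanted the constant exactly as written, one would restate the spread lemma to directly produce a set of at least $\nu_1 n$ complex indices with $|z_\ell|\ge\nu_2 n^{-1/2}$, which follows from the same Markov/pigeonhole argument applied to $|z_\ell|^2$ in place of $\hat z_k^2$.
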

\begin{remark}
The statement is slightly different from that of \cite{rudelson2014recent}.  A minor variation of the proof there gives the above result.
\end{remark}
This lemma reduces the invertibility issue into a distance problem.  As we will bound the maximum probability over all the distances and arbitrary shifts, it suffices to consider $dist(X_n, H_n)$ for concreteness.  In fact, conditioning on $H_n$, we can fix a unit normal vector independent of $X_n$.  The distance is then simply the norm of the dot product of this unit normal vector with an independent random vector, so the question becomes one of small ball probability.  

\subsubsection{Small Ball Probability}
For intuition and motivation, we briefly revert back to the real case.  Consider the linear combination, $S = \sum_{k=1}^n a_k \xi_k$.  
\begin{definition}
	The \emph{L{\'e}vy concentration function} of $S$ is defined as
	$$
	\mathcal{L}(S, \eps) := \sup_{v \in \R} \P(|S-v| \leq \eps ) .
	$$
\end{definition}
Clearly, the vector $a = (a_1, \dots, a_n)$ has a strong influence on the L{\'e}vy concentration.  For example, if 
$$
a = (\frac{1}{\sqrt 2}, \frac{1}{\sqrt 2}, 0,0,\dots, 0)
$$
and $\xi$ are independent Rademacher random variables ($\pm 1$), then $\P(S = 0) = \frac{1}{2}$.  On the other hand, if 
$$
a = (\frac{1}{\sqrt n}, \dots, \frac{1}{\sqrt n})
$$
then for even $n$, $\P(S = 0) = 2^{-n} {n \choose n/2}\sim n^{-1/2}$.  Littlewood and Offord \cite{littlewood1943number} initiated the study of the dependence of the L{\'e}vy function on the arithmetic structure of $a$.  Recently, Tao and Vu \cite{tao2009inverse} proposed that a large small-ball probability implies a strong additive structure.  Results in the classification of this additive structure are now called Inverse Littlewood-Offord theorems
\cite{tao2009inverse,tao2009littlewood,tao2012littlewood,tao2010sharp,rudelson2008littlewood,rudelson2009smallest,friedland2007bounds}.
We now introduce a two-dimensional small ball probability bound which corresponds to a bound on the Levy concentration for complex sums with complex coefficients.  Rudelson and Vershynin \cite{rudelson2008littlewood} proposed a measure for the additive strucutre of a vector $v \in \R^n$. They coined the term Essential Least Common Denominator (lcd).  
 $$
 lcd_{\alpha, \gamma}(v) := \inf \Big \{\hat{\theta} > 0: \dist (\hat{\theta} v , \mathbb{Z}^{n}) < \min(\gamma \|\hat{\theta} v\|_2, \alpha)\Big\}.
 $$  
We generalize this definition to handle complex vectors and our matrix construction $[v]$.

\begin{definition}
	Fix parameters $\gamma \in (0,1)$ and $\alpha > 0$,  
	we define the Essential Least Common Denominator of $v \in \C^n$ to be 
 $$
 LCD_{\alpha, \gamma}(v) := \inf \Big \{ \|\theta\|_2 : \theta \in \R^{2}, \dist([v]^T \theta, \mathbb{Z}^{2n}) < \min(\gamma \|\theta\|_2, \alpha)\Big \}.
 $$
\end{definition}
\begin{remark}
The anonymous referee has pointed out that such a generalization was introduced for matrix arguments with real entries (but not in the complex setting) by Rudelson and Vershynin in \cite{rudelson2009smallest} which renders a proof of Theorem \ref{smallball} unnecessary.
\end{remark}
We use this measure of structure to control the small-ball probability.
\begin{theorem}[Small Ball Probability via LCD]\label{smallball}
	Consider a random vector $\boldsymbol{\xi} = (\xi_1, \dots, \xi_{2n})$ with $\xi_j$ iid, mean 0, variance 1 and subgaussian with moment $B$, and a $v \in \mathcal{S}_{\C}^{n-1}$.  There exists constants $C_{\ref{smallball}}, c_{\ref{smallball}}$ depending only on $B$, such that for $\alpha > 0$ and for
	$$
	\eps \geq \frac{4}{LCD_{\alpha, \gamma}(v)}
	$$
	we have
	$$
	\sup_{w \in \R^2}\P(\|[v] \boldsymbol{\xi} - w\|_2 \leq \eps ) \leq C_{\ref{smallball}} \eps^2 + C_{\ref{smallball}}\exp(-c_{\ref{smallball}} \alpha^2).
	$$
\end{theorem}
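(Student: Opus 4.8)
The plan is to reduce the two-dimensional small-ball estimate to a Fourier-analytic computation, following the Esseen-type approach used by Rudelson and Vershynin in the one-dimensional case. The starting point is the Esseen inequality in $\R^2$: for any random vector $Y \in \R^2$ and any $\eps > 0$,
$$
\sup_{w \in \R^2} \P(\|Y - w\|_2 \leq \eps) \leq C \eps^2 \int_{\|t\|_2 \leq 1/\eps} |\E \exp(i\, t \cdot Y)|\, dt.
$$
I would apply this with $Y = [v]\boldsymbol{\xi} \in \R^2$. Writing $t = \theta \in \R^2$, the inner product $t \cdot ([v]\boldsymbol{\xi})$ equals $([v]^T\theta) \cdot \boldsymbol{\xi} = \sum_{k=1}^{2n} ([v]^T\theta)_k \xi_k$, so by independence of the $\xi_k$ the characteristic function factors as
$$
\big|\E \exp(i\, \theta \cdot [v]\boldsymbol{\xi})\big| = \prod_{k=1}^{2n} \big|\E \exp\!\big(i ([v]^T\theta)_k \xi_k\big)\big|.
$$

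The next step is the standard scalar estimate: for a mean-zero, variance-one, subgaussian random variable $\xi$, there is a constant $c > 0$ (depending on $B$) with $|\E \exp(i s \xi)| \leq \exp(-c\, \|s\|_{\R/\Z}^2)$ for all $|s|$ bounded by an absolute constant, where $\|s\|_{\R/\Z}$ denotes the distance from $s$ to the nearest integer, and more simply $|\E \exp(i s\xi)| \leq \exp(-c s^2)$ for $|s|$ small. (The subgaussian hypothesis is what gives the clean Gaussian-type decay here; one uses that $\E \xi = 0$, $\E \xi^2 = 1$ to get the quadratic behavior near $0$ and a crude bound away from lattice points.) Multiplying over $k$ gives
$$
\big|\E \exp(i\, \theta \cdot [v]\boldsymbol{\xi})\big| \leq \exp\!\Big(-c \sum_{k=1}^{2n} \big\|([v]^T\theta)_k\big\|_{\R/\Z}^2\Big) = \exp\!\big(-c\, \dist([v]^T\theta, \Z^{2n})^2\big),
$$
valid on the region where each coordinate $([v]^T\theta)_k$ is small, which holds throughout the ball $\|\theta\|_2 \leq 1/\eps$ since $\|[v]\| = \|v\|_2 = 1$ forces $\|[v]^T\theta\|_\infty \leq \|\theta\|_2 \leq 1/\eps$ and we may assume $\eps$ exceeds a small absolute constant times $1/\sqrt n$ or handle the small-$\eps$ range by the trivial bound.

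Now the definition of $LCD_{\alpha,\gamma}(v)$ enters. For $\|\theta\|_2 < LCD_{\alpha,\gamma}(v)$ we have $\dist([v]^T\theta, \Z^{2n}) \geq \min(\gamma\|\theta\|_2, \alpha)$, so the integrand is bounded by $\exp(-c\min(\gamma\|\theta\|_2, \alpha)^2)$. The hypothesis $\eps \geq 4/LCD_{\alpha,\gamma}(v)$ guarantees that the entire domain of integration $\{\|\theta\|_2 \leq 1/\eps\}$ lies inside the ball of radius $LCD_{\alpha,\gamma}(v)/4$, so this lower bound applies everywhere on the domain. I would then split the integral into the region $\|\theta\|_2 \leq \alpha/\gamma$, where the bound is $\exp(-c\gamma^2\|\theta\|_2^2)$ and the Gaussian integral over $\R^2$ contributes an $O(1/\gamma^2)$ constant, and the region $\|\theta\|_2 > \alpha/\gamma$ (intersected with $\|\theta\|_2 \leq 1/\eps$), where the bound is the constant $\exp(-c\alpha^2)$ times the area, which is $O(1/\eps^2)\exp(-c\alpha^2)$. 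Multiplying back by the $C\eps^2$ prefactor from Esseen yields $C\eps^2 + C\exp(-c\alpha^2)$ after absorbing $\gamma$-dependence into the constants, which is the claimed bound.

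The main obstacle I anticipate is the careful bookkeeping in the scalar characteristic-function estimate to ensure the quadratic decay $|\E e^{is\xi}| \leq \exp(-c\|s\|_{\R/\Z}^2)$ holds uniformly in the relevant range with a constant depending only on $B$ — this requires combining a Taylor expansion near integer points with a uniform spectral gap away from them, and the genuinely complex / two-dimensional structure of $[v]$ means one must track that the $2n$ coordinates $([v]^T\theta)_k$ genuinely realize the vector $[v]^T\theta$ whose distance to $\Z^{2n}$ is controlled by the LCD. A secondary subtlety is justifying that one may restrict to $\eps$ bounded below (say $\eps \geq c_0 n^{-1/2}$); for smaller $\eps$ the stated conclusion $C\eps^2 + C\exp(-c\alpha^2)$ is trivially true as long as the second term is a positive constant, but one should confirm the constant $C$ is chosen large enough to cover that degenerate range. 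Everything else is routine Gaussian integration in two dimensions.
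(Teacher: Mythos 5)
Your high-level strategy --- Esseen's inequality in $\R^2$, factorization of the characteristic function over the $2n$ independent coordinates, a lower bound on $\dist([v]^T\theta,\Z^{2n})$ via the LCD on the domain of integration, and a two-regime split of the resulting integral --- is precisely the route taken in the proof of \cite[Theorem 3.3]{rudelson2009smallest}, which is what the paper cites here in lieu of an independent argument (the paper's only comment is that its $LCD$ coincides with that reference and that condition (3.2) there holds with equality because $[v]$ has orthonormal rows, $v$ being a unit vector). So you have identified the correct skeleton and the correct reference point.

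The genuine gap is the scalar characteristic-function estimate, which you characterize as bookkeeping but which is the crux of the argument and is not correct as you state it. The inequality $|\E e^{is\xi}| \leq \exp(-c\,\dist(s,\Z)^2)$ fails for general mean-zero, variance-one, subgaussian $\xi$: take $\xi$ Rademacher and $s = 2\pi$, so that $\E e^{is\xi} = \cos(2\pi) = 1$ while $\dist(2\pi,\Z) = 2\pi - 6 \approx 0.28 > 0$. If instead you retreat to $|s|$ below a small absolute constant (where the Taylor bound $|\E e^{is\xi}| \le e^{-cs^2}$ is sound), you then need every coordinate $([v]^T\theta)_k$ to lie in that small range throughout the ball $\|\theta\|_2 \le 1/\eps$, which forces $\max_k |v_k| \lesssim \eps$; but the theorem is asserted for arbitrary $v \in \mathcal S_{\C}^{n-1}$, including vectors with a coordinate of size $\Theta(1)$, so that restriction is unavailable. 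The device that closes this gap in \cite{rudelson2009smallest} is symmetrization followed by conditioning: with $\bar\xi = \xi - \xi'$, one uses $|\E e^{is\xi}|^2 = \E\cos(s\bar\xi) \le \exp\bigl(-\tfrac12\E(1-\cos(s\bar\xi))\bigr)$, the elementary inequality $1 - \cos x \ge \tfrac{2}{\pi^2}\min_{q\in\Z}|x - 2\pi q|^2$ valid for \emph{all} real $x$, and conditioning on the event $|\bar\xi| \in [a,b]$, which has probability bounded below in terms of $B$ alone. This yields an integrand bound of the form $\exp\bigl(-c\,\E\,\dist(\tfrac{\bar\xi}{2\pi}[v]^T\theta,\Z^{2n})^2\bigr)$ with no restriction on $\theta$ or on the size of the entries of $v$. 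The LCD lower bound must then be applied to the randomly rescaled vector $\tfrac{\bar\xi}{2\pi}\theta$ rather than to $\theta$ itself; the numerical factor in the hypothesis $\eps \ge 4/LCD_{\alpha,\gamma}(v)$ exists precisely to keep this rescaled vector inside the ball of radius $LCD_{\alpha,\gamma}(v)$. With that corrected scalar estimate substituted, your integral computation in the final step goes through as you outline.
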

\begin{remark}
	In our application, we will set $\alpha$ to be $\beta \sqrt n$ for some small constant $\beta$, so the $\exp(-c \alpha^2)$ term is negligible.
\end{remark}
\begin{proof}
	See the proof of \cite[Theorem 3.3]{rudelson2009smallest}.  We simply observe that our definition of $LCD$ coincides with that of \cite[Theorem 3.3]{rudelson2009smallest} and condition (3.2) of \cite[Theorem 3.3]{rudelson2009smallest} holds with equality since $v$ is a unit vector. 
    \end{proof}
    
    By the tensorization lemma \cite[Lemma 2.2]{rudelson2008littlewood}, we get the following bound for a single vector.
        \begin{lemma}[Invertibility for a Single Vector]\label{singlevector}
        	Let $M'_{mn}$ be a $m \times n$ complex random matrix with entries of the form $m_{ij} + \zeta_{ij}$ where $m_{ij}$ is a deterministic complex number and $\zeta_{ij}$ is genuinely complex with moment $B$.  Then for $\alpha>0$ and vector $v \in \mathcal{S}_\C^{n-1}$, and for every $\eps > 0$, satisfying
        	\begin{equation} \label{eq_epscondition}
        	\eps \geq \max \left(\frac{4}{LCD_{\alpha, \gamma}(v)}, \exp(- \alpha^2) \right)
        	\end{equation}
        	there exists a $C_{\ref{singlevector}}$ only depending on $B$ such that we have
        	$$
        	\P(\|M'_{nm} v\|_2 < \eps m^{1/2}) \leq (C_{\ref{singlevector}} \eps)^{2m}.
        	$$
        \end{lemma}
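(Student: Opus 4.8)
The plan is to reduce the claim to a coordinatewise two‑dimensional small‑ball estimate, feed each coordinate into Theorem \ref{smallball}, and then tensorize. Fix $v\in\mathcal{S}_\C^{n-1}$ and look at the $k$‑th coordinate of $M'_{mn}v$. Writing the $k$‑th row of the random part as $\zeta_k=(\zeta_{k1},\dots,\zeta_{kn})$ with $\zeta_{kj}=\xi_{kj}+i\xi'_{kj}$, and letting $\mu_k:=\sum_j m_{kj}v_j\in\C$ be the (deterministic) contribution of the $k$‑th row of $M$, we have $(M'_{mn}v)_k=\mu_k+v^T\zeta_k$. Setting $\boldsymbol{\xi}^{(k)}:=\widehat{\zeta_k}=(\xi_{k1},\dots,\xi_{kn},\xi'_{k1},\dots,\xi'_{kn})\in\R^{2n}$ — a vector whose $2n$ entries are iid, mean $0$, variance $1$, subgaussian with moment $B$ — the identity from the Notation section, $|v^Ta|=\|[v]\hat a\|_2$, together with the identification $\C\cong\R^2$ gives
$$
|(M'_{mn}v)_k|=\big\|[v]\boldsymbol{\xi}^{(k)}-w_k\big\|_2,
$$
for a vector $w_k\in\R^2$ determined only by the deterministic entries. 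Moreover $\boldsymbol{\xi}^{(1)},\dots,\boldsymbol{\xi}^{(m)}$ are independent, since the entries of $N_n$ (hence of $M'_{mn}$) are independent across rows.

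Next I would apply Theorem \ref{smallball}. The hypothesis $\eps\geq 4/LCD_{\alpha,\gamma}(v)$ means that for every $t$ with $4/LCD_{\alpha,\gamma}(v)\leq t$ one has $\P\big(|(M'_{mn}v)_k|\leq t\big)\leq \sup_{w\in\R^2}\P\big(\|[v]\boldsymbol{\xi}^{(k)}-w\|_2\leq t\big)\leq C_{\ref{smallball}}t^2+C_{\ref{smallball}}\exp(-c_{\ref{smallball}}\alpha^2)$; for $t$ below this threshold I use monotonicity of the distribution function together with $4/LCD_{\alpha,\gamma}(v)\leq\eps$. Combining the two ranges yields a single bound $\P\big(|(M'_{mn}v)_k|\leq t\big)\leq C\big(t^2+\eps^2+\exp(-c_{\ref{smallball}}\alpha^2)\big)$ valid for all $t\geq 0$ and all $k$.

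Finally I would tensorize. Since $\|M'_{mn}v\|_2^2=\sum_{k=1}^m|(M'_{mn}v)_k|^2$ is a sum of independent nonnegative terms, the Laplace‑transform bound underlying the tensorization lemma \cite[Lemma 2.2]{rudelson2008littlewood}, applied with the parameter $\lambda=\eps^{-2}$, gives
$$
\P\big(\|M'_{mn}v\|_2<\eps\, m^{1/2}\big)\leq e^{m}\prod_{k=1}^m \E\exp\!\big(-|(M'_{mn}v)_k|^2/\eps^2\big)\leq\big(C'(\eps^2+\exp(-c_{\ref{smallball}}\alpha^2))\big)^m,
$$
where the per‑coordinate Gaussian‑type integral of $C(t^2+\eps^2+\exp(-c_{\ref{smallball}}\alpha^2))$ against the weight $2\eps^{-2}t\,e^{-t^2/\eps^2}$ reproduces the factor $\eps^2+\exp(-c_{\ref{smallball}}\alpha^2)$. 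The remaining hypothesis $\eps\geq\exp(-\alpha^2)$ is exactly what forces $\exp(-c_{\ref{smallball}}\alpha^2)$ to be at most a constant multiple of $\eps^2$, so the right‑hand side collapses to $(C_{\ref{singlevector}}\eps)^{2m}$; the exponent $2m$ (rather than $m$) is precisely the dividend of the two‑dimensional small‑ball bound $\eps^2$ replacing the real bound $\eps$.

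All of the conceptual content is already contained in Theorem \ref{smallball}, so there is no deep obstacle here; the lemma is essentially an assembly. The one place that requires care — and the step I expect to be the main, if minor, difficulty — is the final bookkeeping: verifying that the exponential error term coming out of Theorem \ref{smallball} is genuinely dominated by $\eps^2$ under the stated lower bound on $\eps$, and that the squared‑exponent form $(C_{\ref{singlevector}}\eps)^{2m}$ survives the tensorization intact rather than degrading to $(C\eps)^{m}$.
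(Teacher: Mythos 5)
Your proposal is correct and is essentially the proof the paper intends: the paper's ``proof'' is a single sentence citing the tensorization lemma, and your write-up fills in precisely the steps that sentence compresses — rewriting each coordinate $(M'_{mn}v)_k$ as $\|[v]\boldsymbol{\xi}^{(k)}-w_k\|_2$ with $\boldsymbol{\xi}^{(k)}$ iid subgaussian across rows, invoking Theorem~\ref{smallball} for the per-coordinate two-dimensional small-ball bound, and then tensorizing via the Laplace transform. Your observation that the quadratic exponent $(C\eps)^{2m}$ comes from the $\eps^2$ in the 2D small-ball bound (rather than the linear $\eps$ of the real case) is exactly the right bookkeeping, and you are also right to flag the absorption of $\exp(-c_{\ref{smallball}}\alpha^2)$ into $\eps^2$ as the one delicate step; as written the stated condition $\eps\geq\exp(-\alpha^2)$ only yields $\exp(-c_{\ref{smallball}}\alpha^2)\lesssim\eps^2$ if $c_{\ref{smallball}}\geq 2$, a harmless technicality (adjusting the threshold to $\eps\geq\exp(-c_{\ref{smallball}}\alpha^2/2)$ fixes it, and in the application $\alpha=\beta\sqrt n$ so both are exponentially small) but one that is present in the paper's own statement rather than introduced by you.
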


    \subsection{Random Normal Vectors have Large LCD}
    We now show that it is unlikely that a random normal vector will have small LCD by an $\eps$-net argument.
    We first prove a lower bound on the $LCD$ for incompressible vectors that will be of use in the proof of Lemma \ref{randomnormal}. 
        
        \begin{lemma} [Lower Bound on LCD]\label{lowerboundLCD}
        There exists constants $\gamma > 0$ and $\lambda > 0$ only depending on $\delta, \rho$ such that for any incompressible vector $v \in \mathcal{S}_\C^{n-1}$ and any $\alpha >0$ we have $LCD_{\alpha, \gamma}(v) \geq \lambda n^{1/2}$.	
        \end{lemma}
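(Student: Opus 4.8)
The plan is to rewrite the complex $LCD$ in terms of how well the vector $w\bar v$ can approximate the Gaussian integers, and then carry out the incompressibility argument of Rudelson and Vershynin (\cite{rudelson2008littlewood}, and for matrix arguments \cite{rudelson2009smallest}) in this complex language. Identify $\theta = (\theta_1,\theta_2)^T \in \R^2$ with the scalar $w := \theta_1 + i\theta_2 \in \C$. A direct computation from the definition of $[v]$ shows that, under the identification $\R^{2n}\cong\C^n$ given by $v\leftrightarrow\hat v$, the vector $[v]^T\theta$ corresponds to $w\bar v\in\C^n$; in particular $\|[v]^T\theta\|_2 = |w|\,\|v\|_2 = |w| = \|\theta\|_2$ (this is why condition (3.2) of \cite{rudelson2009smallest} holds with equality), and, writing $\mathbb{Z}+i\mathbb{Z}$ for the Gaussian integers, $\dist([v]^T\theta,\mathbb{Z}^{2n})^2 = \sum_{k=1}^n \dist(w\bar v_k,\mathbb{Z}+i\mathbb{Z})^2$. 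Since $\min(\gamma\|\theta\|_2,\alpha)\le\gamma\|\theta\|_2$ and $\theta=0$ is never admissible in the definition of $LCD$, it suffices (uniformly in $\alpha>0$) to produce $\gamma,\lambda>0$ depending only on $\delta,\rho$ such that any $w\in\C\setminus\{0\}$ with $\dist(w\bar v,(\mathbb{Z}+i\mathbb{Z})^n) < \gamma|w|$ has $|w|\ge\lambda n^{1/2}$; equivalently, for every $w$ with $0<|w|\le\lambda n^{1/2}$ we will show $\dist(w\bar v,(\mathbb{Z}+i\mathbb{Z})^n) \ge \gamma|w|$.

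The elementary point is that whenever $|w\bar v_k|<1/2$ the nearest Gaussian integer to $w\bar v_k$ is $0$ (every nonzero Gaussian integer has modulus at least $1$), so on such coordinates $\dist(w\bar v_k,\mathbb{Z}+i\mathbb{Z}) = |w|\,|v_k|$ exactly. Thus I want a linear-in-$n$ set of indices $k$ on which $|v_k|$ is both bounded below (so those coordinates contribute a definite amount) and small enough that $|w|\,|v_k|<1/2$. The lower bound is exactly incompressibility: by Lemma~\ref{spread} there is a spread set $\sigma\subseteq\{1,\dots,2n\}$ with $|\sigma|\ge\nu_1 n$ and $|\hat v_j|\ge\nu_2 n^{-1/2}$ for $j\in\sigma$, and collapsing real and imaginary coordinates, $J_1 := \{k\le n : k\in\sigma \text{ or } n+k\in\sigma\}$ satisfies $|J_1|\ge\nu_1 n/2$ and $|v_k|\ge\nu_2 n^{-1/2}$ for all $k\in J_1$.

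The one place where incompressibility gives nothing is the upper bound on $|v_k|$ — the spread part bounds $|v_k|$ from below, but the coordinate of $\hat v$ paired with a spread coordinate may be large — and this is the crux. It is handled using only normalization: since $\sum_k|v_k|^2=1$, at most $4|w|^2$ indices $k$ satisfy $|v_k|\ge(2|w|)^{-1}$, so if $|w|\le\lambda n^{1/2}$ with $\lambda := \sqrt{\nu_1}/4$, then $4|w|^2\le\nu_1 n/4\le|J_1|/2$ and the set $J_2 := \{k\in J_1 : |v_k|<(2|w|)^{-1}\}$ has $|J_2|\ge|J_1|/2\ge\nu_1 n/4$. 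For $k\in J_2$ we have $|w|\,|v_k|<1/2$, hence $\dist(w\bar v_k,\mathbb{Z}+i\mathbb{Z}) = |w|\,|v_k|\ge\nu_2|w|n^{-1/2}$, and therefore $\dist(w\bar v,(\mathbb{Z}+i\mathbb{Z})^n)^2 \ge \sum_{k\in J_2}|w|^2|v_k|^2 \ge |J_2|\,\nu_2^2|w|^2/n \ge (\nu_1\nu_2^2/4)|w|^2$. Taking $\gamma := \tfrac14\sqrt{\nu_1}\,\nu_2$, which lies in $(0,1)$ since $\nu_1\nu_2^2\le\|v\|_2^2=1$, we obtain $\dist(w\bar v,(\mathbb{Z}+i\mathbb{Z})^n) \ge \tfrac12\sqrt{\nu_1}\,\nu_2|w| > \gamma|w|$, as required; both $\gamma$ and $\lambda$ depend only on $\nu_1,\nu_2$, hence only on $\delta,\rho$. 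I expect the $\ell_2$-normalization pigeonhole of this last paragraph to be the only substantive point; the reduction in the first paragraph is a routine unwinding of the definition of $[v]$, and the rest mirrors the real-case argument of \cite{rudelson2008littlewood}.
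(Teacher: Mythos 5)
Your proof is correct, and it takes a genuinely cleaner route than the one in the paper. The key simplification is the first paragraph: you recognize that the $2\times 2n$ matrix structure $[v]^T\theta$ is nothing but multiplication of $\bar v$ by the complex scalar $w=\theta_1+i\theta_2$, so the quantity being approximated by integer vectors is $w\bar v\in\C^n$, and the relevant lattice is the Gaussian integers. Once this is made explicit, the exact identity $\dist(w\bar v_k,\mathbb{Z}+i\mathbb{Z})=|w|\,|v_k|$ (valid whenever $|w|\,|v_k|<1/2$, since every nonzero Gaussian integer has modulus $\geq 1$) lets you read off the contribution of each coordinate directly. The paper, by contrast, works entirely in real coordinates: it exploits the rotation symmetry of $[v]^T$ (sending $\theta$ to $\theta'=(-\theta_2,\theta_1)$, which in your language is just $w\mapsto iw$), invokes a WLOG that half the spread set is among real coordinates, uses Markov's inequality to control $|\Im(v_j)|$ on a large set, and then runs a two-case analysis on the relative sizes of $|\theta_1|$ and $|\theta_2|$ to lower bound $\|([v]^T\theta)_L\|_2+\|([v]^T\theta')_L\|_2$. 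Your $\ell_2$-pigeonhole step --- that at most $4|w|^2$ coordinates can satisfy $|v_k|\geq(2|w|)^{-1}$, so once $|w|\leq\lambda n^{1/2}$ a positive fraction of the spread indices survive --- replaces the paper's Markov-plus-cases machinery and gives an upper bound on $|v_k|$ (not just $|\Im(v_k)|$), which is what makes the distance equality usable and the whole argument collapse into a single clean computation. Both proofs ultimately rest on the same ingredients (incompressibility via Lemma~\ref{spread}, the fact that nearby lattice points must be the origin when the test vector is small), but yours avoids the WLOG, the case split, and the auxiliary sets $I(v),J(v),L'(v)$ entirely. The constants you produce ($\lambda=\sqrt{\nu_1}/4$, $\gamma=\sqrt{\nu_1}\nu_2/4$) are explicit and the sanity check $\gamma<1$ via $\nu_1\nu_2^2\leq 1$ is a nice touch.
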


        \begin{proof}
        	Assume to the contrary that $LCD_{\alpha, \gamma}(v) < \lambda n^{1/2}$ where $\lambda$ will be specified later.  By definition of the $LCD$ there exists $\theta \in \mathbb{R}^2$ and $p \in \mathbb{Z}^{2n}$ such that 
        	\begin{equation}\label{LCDupper}
        	\|[v]^T \theta - p\|_2 < \gamma \|\theta\|_2 < \gamma \lambda n^{1/2}.
        	\end{equation}
        	
        	The key observation is that due to the symmetry of $[v]^T$ (See Figure \ref{notation}), if 
        	    	     \begin{equation*}
        	    	     \|[v]^T \theta - p\|_2 = \left\|[v]^T 
        	    	     \left(\begin{array}{c}
        	    	     \theta_1 \\
        	    	     \theta_2
        	    	     \end{array}\right) - \left(
        	    	     \begin{array}{c}
        	    	     
        	    	     p_1\\
        	    	     p_2 
        	    	     \end{array}\right) \right\|_2
        	    	     \end{equation*}
        	    	     where $p_1, p_2 \in \mathbb{Z}^n$
        	    	     then
        	    	     \begin{equation*}
        	    	     \|[v]^T \theta - p\|_2  = \|[v]^T \theta' - p'\|_2 := \left\|[v]^T 
        	    	     \left(\begin{array}{c}
        	    	     -\theta_2 \\
        	    	     \theta_1
        	    	     \end{array}\right) - \left(
        	    	     \begin{array}{c}
        	    	     
        	    	     -p_2\\

        	    	     p_1 \\
        	    	     
        	    	     \end{array}\right)\right \|_2.
        	    	     \end{equation*}
        	   Clearly, $\|\theta\|_2 = \|\theta'\|_2.$
        	
        	Recall the definition of the spread part of the vector $v$ from Lemma \ref{spread}.  Let $\sigma(v) \subseteq [2n]$ denote the spread part of the vector $v$.  Assume without loss of generality that half of the spread coordinates are real, i.e. $|\sigma(v) \cap [n]| > \frac{\nu_1}{2} n$. Fix a constant $k$ such that 
        	$$1/k^2 < \nu_1/4.$$
        	Since, $v$ is a unit vector, by Markov's inequality, there exists a set $I(v) \subset [n]$ of size at least $(1-1/k^2) n$ such that for $j \in I(v)$, $|\Im(v_j)| < k/ \sqrt n$.  Thus, 
        	\begin{equation}\label{smallset}
        	|\sigma(v) \cap [n] \cap I(v)| \geq \frac{\nu_1}{4} n.
        	\end{equation}
        	Now let 
        	$$J(v):=\left\{j\in [n] : \max\{|([v]^T \theta)_j - p_j|, |([v]^T \theta')_j + p_{j+n}|\} < \frac{ \sqrt 2 \gamma \lambda}{\sqrt{\nu_1}} \right\}.$$ 
        	We finally define 
        	$$
        	L(v) := \sigma(v) \cap [n]  \cap I(v) \cap J(v).
        	$$
    	    By equation (\ref{LCDupper}),we have that $|J(v)| \geq (1-\nu_1/8)n$ and combining this bound with equation (\ref{smallset}) yields
    	     $$
    	     |L(v)| \geq \frac{\nu_1}{8} n.
    	     $$

    	    To exploit the symmetry, we define 
    	    $$L'(v) := \{j: (j-n) \in L(v)\}.$$  
        	For any $j \in L(v)$, 
        	$$
        	|p_j| < |([v]^T \theta)_j| + \frac{\sqrt 2 \gamma \lambda}{\sqrt \nu_1} \leq |\theta_1| |\Re(v_j)| + |\theta_2| |\Im(v_j)| + \frac{\sqrt 2 \gamma \lambda}{\sqrt \nu_1} < (\nu_3 + k + \frac{\sqrt 2 \gamma}{\sqrt \nu_1}) \lambda < 1
        	$$   
        	for small enough $\lambda$ and assuming $\gamma < 1$.
        	Similarly, for $j \in L'(v)$
        	$$
        	|p_j| < |([v]^T \theta')_{j+n}| + \frac{\sqrt 2 \gamma \lambda}{\sqrt \nu_1} \leq |\theta_2| |\Re(v_{j+n})| + |\theta_1| |\Im(v_{j+n})| + \frac{\sqrt 2 \gamma \lambda}{\sqrt \nu_1} < (\nu_3 + k + \frac{\sqrt 2 \gamma}{\sqrt \nu_1}) \lambda < 1.
        	$$
        	Since $p_j$ must be an integer, this implies $p_j = 0$ for $j \in L(v) \cup L'(v)$.  
        	Thus, 
        	$$\|([v]^T \theta)_L - p_L\|_2 = \|([v]^T \theta)_L\|_2$$ and $$\|([v]^T \theta)_{L'} - p_{L'}\|_2 = \|([v]^T \theta)_{L'}\|_2.$$  Using the inequality $\sqrt{a^2 + b^2} \geq \frac{a+b}{\sqrt 2}$ which holds for $a,b \in \mathbb{R}$, we now lower bound $\|[v]^T \theta - p\|$ by 
        	$$
        	\frac{1}{\sqrt 2}\Big(\|([v]^T \theta)_L\|_2 + \|([v]^T \theta)_{L'}\|_2 \Big)=  \frac{1}{\sqrt 2}\Big(\|([v]^T \theta)_{L}\|_2 + \|([v]^T \theta')_{L}\|_2 \Big).
        	$$  
        	Note that by the definition of $I(v)$,
        	$$
        	\|\Im(v)_L \|_2 < k
        	$$
        	and by the definition of $\sigma(v)$,
        	$$
        	\|\Re(v)_L\|_2 \geq \frac{ \nu_2 \sqrt \nu_1}{2 \sqrt 2}.
        	$$
        	Now we have two cases to consider.  Let $c'>0$ be a constant such that
        	$$
        	\sqrt{1-c'^2} \frac{\nu_2 \sqrt \nu_1}{2 \sqrt 2}  - c'k> 0.
        	$$  
        	\begin{enumerate}
        		\item Assume that $|\theta_1| \geq c'\|\theta\|_2 $ and $|\theta_2| \geq c' \|\theta\|_2$.  In this case, and adding the condition that 
        		$$\theta_1 \Re(v)_L \cdot \theta_2\Im(v)_L  \geq 0
        		$$
        		we find that
        		\begin{align*}
        		\|([v]^T \theta)_L\|_2 &= \| \theta_1\Re(v)_L + \theta_2 \Im(v)_L\|_2   \\
        		&\geq \|\theta_1 \Re(v) \|_2 \\
        		&\geq c' \frac{\nu_2 \sqrt \nu_1}{2 \sqrt 2} \|\theta\|_2.
        		\end{align*}
        	If $\theta_1 \Re(v)_L \cdot \theta_2\Im(v)_L  < 0$ then 
        	$$
        	\|-\theta_2 \Re(v)_L + \theta_1\Im(v)_L \|_2 \geq |\theta_2|\|\Re(v)_L\|_2
        	$$	 
            so
            \begin{align*}
            \|([v]^T \theta')_L\|_2 &\geq \| -\theta_2\Re(v)_L + \theta_1 \Im(v)_L\|_2   \\
            &\geq c' \frac{\nu_2 \sqrt \nu_1}{2 \sqrt 2} \|\theta\|_2.
            \end{align*} 
            
            \item If we assume that $|\theta_2| < c'\|\theta\|_2$ then $|\theta_1| > \sqrt{1-c'^2} \|\theta\|_2$.  Therefore,
            $$
            \|([v]^T \theta)_L\|_2 \geq \Big| |\theta_1|\|\Re(v)_L\|_2 - |\theta_2| \|\Im(v)_L\|_2 \Big| \geq (\sqrt{1-c'^2} \frac{\nu_2 \sqrt \nu_1}{2 \sqrt 2}  - c'k) \|\theta\|_2 .
            $$
            By an identical arugment applied with $\theta'$, we obtain the same lower bound for $L'$ for the case $|\theta_1| < c'\|\theta\|_2$.  
        	\end{enumerate}
        	We have shown that
        	$$
        	\|[v]^T \theta- p\|_2 \geq \min\Big\{c' \frac{\nu_2 \sqrt \nu_1}{2 \sqrt 2}, \sqrt{1-c'^2} \frac{\nu_2 \sqrt \nu_1}{2 \sqrt 2}  - c'k\Big\} \|\theta\|_2.
        	$$
        	Setting $\gamma < \min\Big\{c' \frac{\nu_2 \sqrt \nu_1}{2 \sqrt 2}, \sqrt{1-c'^2} \frac{\nu_2 \sqrt \nu_1}{2 \sqrt 2}  - c'k\Big\}$ yields the desired contradiction.
        \end{proof}
        
       For the remainder of the proof, we fix $\lambda$ and $\gamma$ from Lemma \ref{lowerboundLCD}.  We divide the set of potential normal vectors into classes of similar $LCD$.
    \begin{definition}
    	Define $S_D = \{v \in \mathcal{S}_\C^{n-1} : D \leq LCD_{\alpha, \gamma}(x) \leq 2D \} $.
    \end{definition}
    
    \begin{lemma}[Nets for Level Sets for LCD]\label{net}
    	For some absolute constant $C_{\ref{net}} > 0$, there exists a $2\alpha/D$-net of $S_D$ of cardinality at most $ C_{\ref{net}} \frac{(2\alpha+2D)^2}{\alpha^2} \left(\frac{10D}{n^{1/2}}\right)^{2n}$ for $D \geq \lambda n^{1/2}$ and $\alpha = \beta n^{1/2}$ for any $\beta < \lambda$.
    \end{lemma}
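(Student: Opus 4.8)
The plan is to build the net in two stages: first produce a net for the two-dimensional parameter $\theta$, then transfer it through the structure of $[v]$ to a net for vectors $v \in S_D$. Fix $v \in S_D$. By definition of $LCD_{\alpha,\gamma}$, since $D \le LCD_{\alpha,\gamma}(v) \le 2D$, there is NO $\theta \in \R^2$ with $\|\theta\|_2 < D$ and $\dist([v]^T\theta, \Z^{2n}) < \min(\gamma\|\theta\|_2, \alpha)$; and taking $\|\theta\|_2$ slightly above $D$ (or using that the infimum is attained up to a factor $2$) there IS a $\theta_v$ with $D \le \|\theta_v\|_2 \le 2D$ and a lattice point $p_v \in \Z^{2n}$ with $\|[v]^T\theta_v - p_v\|_2 < \alpha$ (here I use $\gamma\|\theta_v\|_2 \ge \gamma D \ge \gamma\lambda n^{1/2} \ge \alpha$ when $\beta < \gamma\lambda$, so the $\min$ is $\alpha$; I will choose $\beta$ that small). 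The key point is that $p_v$ nearly determines $v$ on its spread part: from $[v]^T\theta_v \approx p_v$ and $\|\theta_v\|_2 \ge D$ one recovers, for each coordinate $j$ in the spread set $\sigma(v)$, the value of $v_j$ up to additive error $O(\alpha/D)$, because the $2\times 2$ blocks of $[v]^T$ acting on $\theta_v$ are (after rotating so $\theta_v$ points along a coordinate axis, using the symmetry exploited in Lemma \ref{lowerboundLCD}) invertible with norm comparable to $\|\theta_v\|_2$. Thus $v$ is within $O(\alpha/D)$ of the vector obtained by dividing the relevant entries of $p_v$ by $\|\theta_v\|_2$ (with a rotation), after rescaling to the sphere.

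Concretely, I would proceed as follows. \textbf{Step 1 (enumerate $\theta$'s).} The admissible $\theta_v$ all lie in the annulus $D \le \|\theta\|_2 \le 2D$; a standard volumetric bound gives a $(\alpha/(10D))\cdot D = \alpha/10$-net — more precisely an $(\alpha/D)$-relative net — of this annulus of cardinality $O((2\alpha+2D)^2/\alpha^2)$, which accounts for the prefactor $C_{\ref{net}}(2\alpha+2D)^2/\alpha^2$. \textbf{Step 2 (enumerate lattice points).} Given a fixed approximate $\theta$, every $p_v$ arising satisfies $\|p_v\|_2 \le \|[v]^T\theta_v\|_2 + \alpha \le \|\theta_v\|_2 + \alpha \le 2D + \beta n^{1/2} \le 3D$ (using $\beta n^{1/2} \le \lambda n^{1/2} \le D$), so $p_v$ ranges over lattice points in a ball of radius $3D$ in $\Z^{2n}$; the number of such points is at most $(1 + 6D/\sqrt{2n})^{2n} \le (10D/n^{1/2})^{2n}$, which is the dominant factor. \textbf{Step 3 (reconstruct $v$).} For each pair $(\theta, p)$ in the product of these two enumerations, let $w(\theta,p) := $ (normalization to $\mathcal S_\C^{n-1}$ of the complex vector read off from $p$ via the inverse of the $[\cdot]^T$ action of $\theta$). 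I claim every $v \in S_D$ lies within $2\alpha/D$ of some such $w$: using $\|[v]^T\theta_v - p_v\|_2 < \alpha$ and $\|\theta_v\|_2 \ge D$, dividing through by $\|\theta_v\|_2$ gives $\dist$ between the normalized reconstruction and $v$ at most $\alpha/D$, plus another $\alpha/D$ from replacing $\theta_v$ by its net point — and on the spread part this controls $v$ fully since $v$ is a unit vector with $\Omega(n)$ of its mass on $\sigma(v)$. (On the non-spread coordinates one argues $v$ is already $O(\rho)$-close to being determined; I will absorb this, as in \cite{rudelson2008littlewood,rudelson2009smallest}, by the incompressibility hypothesis.) Taking the union of the reconstructed vectors $w(\theta,p)$ over the two enumerations yields a $2\alpha/D$-net of $S_D$ of the stated cardinality.

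\textbf{Main obstacle.} The delicate point is Step 3: showing that the lattice point $p_v$ together with the (rounded) direction $\theta_v$ pins down $v$ to within $O(\alpha/D)$ in the full $\C^n$ (not merely on the spread part). The $2\times 2$ blocks $\begin{pmatrix}\Re v_j & \Im v_j \\ -\Im v_j & \Re v_j\end{pmatrix}$ of $[v]^T$ are well-conditioned only where $|v_j|$ is bounded below, i.e. on $\sigma(v)$, so off the spread part the reconstruction is not automatically accurate. This is precisely the reason the $LCD$ in Lemma \ref{lowerboundLCD} was set up to interact with the spread structure, and I expect the resolution to mirror the real-variable argument in \cite[proof of Lemma 7.6 / Lemma 4.7]{rudelson2008littlewood}: one only needs the net to be fine on $\sigma(v)$ because two incompressible vectors agreeing to $O(\rho)$ on a linear-sized coordinate set and having the same (rounded) $LCD$ data are genuinely close, by the spread lemma. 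Handling the rotation/symmetry bookkeeping of $[v]$ versus $[v]^T$ cleanly — so that "reading off $v$ from $p$" is a well-defined map — is the other piece requiring care, but it is exactly the symmetry already used in the proof of Lemma \ref{lowerboundLCD}.
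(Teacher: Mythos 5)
Your two-stage plan — enumerate a net of the annulus $A_D=\{\theta: D\le\|\theta\|_2\le 2D\}$, then enumerate lattice points, and multiply cardinalities — matches the paper's proof, and your counting estimates in Steps 1 and 2 are the right ones. However, Step 3 contains a genuine conceptual error: you have misidentified the linear system used to reconstruct $v$ from $(\theta,p)$, and as a result you raise a phantom obstacle whose proposed "resolution" neither works nor is needed.

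The reconstruction solves $[v']^T\theta'=p$ for $v'$, with $\theta'$ fixed. Coordinate-by-coordinate this is the $2\times 2$ system
$\left(\begin{smallmatrix}\theta_1 & \theta_2 \\ \theta_2 & -\theta_1\end{smallmatrix}\right)\left(\begin{smallmatrix}\Re v'_k \\ \Im v'_k\end{smallmatrix}\right) = \left(\begin{smallmatrix}p_k \\ p_{n+k}\end{smallmatrix}\right)$,
whose coefficient matrix depends only on $\theta'$, not on $v'_k$. It is $\|\theta'\|_2$ times an orthogonal matrix, so it is invertible with inverse norm exactly $1/\|\theta'\|_2\le 1/D$ — uniformly over $k$, including the non-spread coordinates. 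Equivalently, for fixed $\theta$ the map $v\mapsto [v]^T\theta$ is just complex scalar multiplication by $\overline{\theta_1+i\theta_2}$ on each entry, hence $\|[v]^T\theta\|_2=\|\theta\|_2\|v\|_2$ (orthogonality of the rows of $[v]$), and inversion is division by that nonzero scalar. The $2\times 2$ blocks you wrote down, $\left(\begin{smallmatrix}\Re v_j & \Im v_j \\ -\Im v_j & \Re v_j\end{smallmatrix}\right)$, are the blocks of $[v]^T$ acting on $\theta$; they are the wrong thing to invert, because in the reconstruction $\theta$ is known and $v$ is the unknown. So there is no degradation of accuracy off the spread part, and the paper's bound $\|v-v'\|_2 = \frac{1}{\|\theta\|_2}\|([v]^T-[v']^T)\theta\|_2 \le \frac{1}{D}(\alpha + r\frac{\alpha+2D}{D}) \le \frac{2\alpha}{D}$ holds coordinate-free.

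Your proposed repair — appealing to incompressibility so that agreement on the spread part implies closeness — would in any case only give a net at scale $O(\rho)$, a fixed constant, not at scale $2\alpha/D$, which in the later application is as small as $\exp(-cn)$. Incompressibility enters this part of the argument only to guarantee $D\ge\lambda n^{1/2}$ via Lemma \ref{lowerboundLCD}; it plays no role inside the construction of the net, precisely because the reconstruction map is a global scaled isometry.
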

    \begin{proof}
    	For a parameter $r$ to be chosen later, we create an $r$-net, $A_{D,r}$ of the annulus, $A_D$ in $\R^2$ defined by 
    	$$A_D := \{\theta: D \leq \|\theta\|_2  \leq 2D\}.$$ 
    	For every $v \in S_D$, there exists $\theta \in A_D$ and $p \in \R^{2n}$ such that 
    	$$\|[v]^T \theta - p\|_2 < \alpha.$$ 
    	Let $\theta' \in A_{D, r}$ be within $r$ of $\theta$.  For every $\theta'$ there  is a unique $v' \in \C^n$ such that $[v']^T \theta' = p$.  This can be seen by examining the $k$-th and $n+k$-th coordinates for all $1 \leq k \leq n$.  This reduces to the following set of linear equations.
    	\begin{equation*}
    	\left(
    	\begin{array}{cc}
    	\theta_1 & \theta_2 \\
    	\theta_2 & - \theta_1
    	\end{array}
    	\right)
    	\left(
    	\begin{array}{c}
    	\Re(v_k) \\
    	\Im(v_k)
    	\end{array}
    	\right) = 
    	\left(
    	\begin{array}{c}
    	p_k \\
    	p_{n+k}
    	\end{array}
    	\right).
    	\end{equation*} 
    	For any $\theta \neq 0$ the matrix
    	\begin{equation*}
    	\left(
    	\begin{array}{cc}
    	\theta_1 & \theta_2 \\
    	\theta_2 & - \theta_1
    	\end{array}
    	\right)
    	\end{equation*}
    	is invertible so the system has a unique solution $v'$.  The norm of $v'$ cannot be too large as
    		
    		\begin{equation*}
    		\left \|
    		\begin{array}{c}
    		\Re(v_k) \\
    		\Im(v_k)
    		\end{array}
    		\right\|_2 = \left \| \left(
    		\begin{array}{cc}
    		\theta_1 & \theta_2 \\
    		\theta_2 & - \theta_1
    		\end{array}
    		\right)^{-1} \right\| 
    		\left\|
    		\begin{array}{c}
    		p_k \\
    		p_{n+k}
    		\end{array}
    		\right \|_2
    		\leq \frac{1}{\|\theta\|_2} \left\|
    		\begin{array}{c}
    		p_k \\
    		p_{n+k}
    		\end{array}
    		\right \|_2
    		\end{equation*}
    	so $$\|v'\|_2 \leq \frac{1}{D} \|p\|_2 \leq \frac{1}{D} (\alpha + \|[v]^T \theta\|_2) \leq \frac{\alpha + 2D}{D}$$   
    	due to the orthogonality of the rows of $[v]$.  Also,
    	\begin{align*}
    	\|v - v'\|_2 &= \frac{1}{\|\theta\|_2} \|([v]^T - [v']^T) \theta\|_2   \\
    	             &\leq \frac{1}{ \|\theta\|_2} \left( \|[v]^T \theta - p\|_2 + \|[v']^T \theta' - p\|_2 + \|[v]'^T (\theta - \theta')\|_2 \right) \\
    	             &\leq \frac{1}{ \|\theta\|_2} \left( \|[v]^T \theta - p\|_2 + \|[v']^T \theta' - p\|_2 + \|v'\|_2 \|\theta - \theta'\|_2 \right) \\
    	             &\leq \frac{1}{D}(\alpha + r \frac{\alpha+ 2D}{D}) \\
    	             &\leq \frac{2 \alpha}{D}.
    	\end{align*}
    	The second inequality follows from the observation that the rows of $[v]-[v']$ are orthogonal and of the same length.  The last inequality is achieved by letting $r := \frac{D\alpha}{(\alpha+2D)}$.  Let $$\mathcal{N} = \{v \in \C^n : \exists \theta' \in A_{D,r} \text{ and } p \in \mathbb{Z}^{2n} \cap B(0,\alpha + 2D) \text{ such that } [v]^T \theta' = p \}.$$	
    	We have shown that $\mathcal{N}$ is an $2\alpha/D$-net of $S_D$.  Now we bound the cardinality of $\mathcal{N}$.
    	$$
    	|\mathcal{N}| \leq |A_{D, r}| \left(1+ \frac{3(\alpha+2D)}{\sqrt{2n}}\right)^{2n} \leq C_{\ref{net}} \frac{(2\alpha+2D)^2}{\alpha^2} \left(\frac{10D}{n^{1/2}}\right)^{2n}.
    	$$
    	These bounds follow from the well-known result on the number of lattice points in a high-dimensional sphere and a simple covering argument in the plane for the annulus.
    \end{proof}
    Now we use a basic covering argument and union bound to show that a vector orthogonal to $n-1$ rows of our random matrix is likely to have a large LCD.  We will first need a basic lemma on the operator norm of random matrices with subgaussian entries.  
    \begin{lemma}[\cite{rudelson2008littlewood}, Lemma 2.4] \label{lemma:operatornorm}
    For $N_n$ a random matrix with iid random variables which are genuinely complex with moment $B$, there exists a $K'>0$ only depending on $B$ such that
        $$
        \P(\|N_n\| > K' n^{1/2}) \leq 2 e^{-n}.
        $$ 
    \end{lemma}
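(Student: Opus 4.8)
The statement is the classical operator-norm bound of Rudelson--Vershynin, and the plan is the standard $\eps$-net / union-bound argument, carried out in real coordinates so that the genuinely complex structure causes no trouble. First I would fix unit vectors $x, y \in \mathcal{S}_\C^{n-1}$ and observe that $y^* N_n x = \sum_{i,j} \overline{y_i}\, x_j\, \zeta_{ij}$, so after separating real and imaginary parts this scalar is a linear combination of the $2n^2$ independent real subgaussian variables $\{\Re \zeta_{ij}, \Im \zeta_{ij}\}$, each of which has variance $1$ and subgaussian moment $B$. The coefficient vector attached to these variables has $\ell^2$-norm equal to $\|x\|_2\|y\|_2 = 1$, so each of $\Re(y^* N_n x)$ and $\Im(y^* N_n x)$ is subgaussian with moment $O(B)$, and by subadditivity of the subgaussian norm the complex scalar $y^* N_n x$ satisfies
$$
\P\big(|y^* N_n x| > t\big) \leq 4 \exp\!\big(- c_0 t^2 / B^2\big)
$$
for an absolute constant $c_0 > 0$ and all $t > 0$.

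Next I would discretize. Identifying $\C^n$ with $\R^{2n}$, the sphere $\mathcal{S}_\C^{n-1}$ admits a $(1/4)$-net $\mathcal{N}$ in the Euclidean metric with $|\mathcal{N}| \leq 9^{2n}$, by the usual volumetric estimate. A routine approximation argument then gives
$$
\|N_n\| = \sup_{x, y \in \mathcal{S}_\C^{n-1}} |y^* N_n x| \leq 2 \max_{x, y \in \mathcal{N}} |y^* N_n x|,
$$
so applying the tail bound above together with a union bound over the at most $9^{4n}$ pairs $(x,y) \in \mathcal{N}\times\mathcal{N}$ yields
$$
\P\big(\|N_n\| > K' n^{1/2}\big) \leq 9^{4n} \cdot 4 \exp\!\big(- c_0 (K')^2 n / (4 B^2)\big).
$$
Choosing $K' = K'(B)$ large enough that $c_0 (K')^2 / (4B^2) \geq 4\log 9 + 2$ makes the right-hand side at most $2 e^{-n}$, which is the claim.

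There is no genuine obstacle here; the proof is mechanical, and the only points needing mild care are (i) confirming that, once the complex coefficients $\overline{y_i} x_j$ are split into real and imaginary parts, they still aggregate to an $\ell^2$-norm that is $O(1)$, so that the effective subgaussian moment of $y^* N_n x$ is governed purely by $B$, and (ii) bookkeeping the constants so the final exponent is exactly $-n$. Equivalently, one may simply invoke \cite[Lemma 2.4]{rudelson2008littlewood} applied to the $2n \times 2n$ real matrix obtained by replacing each entry $\zeta_{ij}$ by its $2\times 2$ real representation, whose entries are, up to sign, the independent mean-zero variance-one subgaussian variables $\Re \zeta_{ij}$ and $\Im \zeta_{ij}$, and which has the same operator norm as $N_n$.
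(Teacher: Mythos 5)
Your main argument is correct, but it takes a genuinely different route from the paper. You re-prove the operator norm bound from scratch for the complex matrix via a subgaussian tail bound for $y^* N_n x$, a $(1/4)$-net of $\mathcal{S}_\C^{n-1}\cong\mathcal{S}_\R^{2n-1}$ of size $9^{2n}$, and a union bound; the constant bookkeeping works out and the exponent $-n$ is achieved. The paper instead uses a one-line reduction: write $N_n = \Re(N_n) + i\,\Im(N_n)$, observe $\|N_n\| \leq \|\Re(N_n)\| + \|\Im(N_n)\|$, and apply the real Rudelson--Vershynin Lemma 2.4 separately to the two independent real matrices $\Re(N_n)$ and $\Im(N_n)$, concluding by a union bound. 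That shortcut is shorter, stays within the scope of the cited lemma, and avoids redoing any net argument; your version is more self-contained and directly exhibits the dependence of $K'$ on $B$. One caveat on your closing remark: applying the cited lemma to the $2n\times 2n$ real representation $\left(\begin{smallmatrix}\Re N_n & -\Im N_n\\ \Im N_n & \Re N_n\end{smallmatrix}\right)$ is not quite legitimate as stated, because that matrix does not have independent entries --- each $\Re\zeta_{ij}$ and each $\Im\zeta_{ij}$ appears twice --- whereas the lemma assumes entrywise independence. The triangle-inequality reduction used in the paper sidesteps this, since $\Re(N_n)$ and $\Im(N_n)$ individually do have iid entries.
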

    \begin{remark}
    In \cite{rudelson2008littlewood}, the statement of the lemma is only for real random variables, but splitting into real and imaginary components and then applying the triangle inequality yields the complex version.
    \end{remark}
    
	\begin{lemma}[Random Normal has Large LCD]\label{randomnormal}
		Let $N'_n$ be a $n-1 \times n$ random matrix with iid entries which are genuinely complex with moment $B$.  Denote by $M'_n = N'_n + M'$ where $M$ is a deterministic $(n-1) \times n$ matrix. Let $Z_1^T, \dots, Z_{n-1}^T$ designate the rows of $M'_n$.  Consider a vector $v$ orthogonal to all the $Z_j$.  Then there exists constants $c_{\ref{randomnormal}},c_{\ref{randomnormal}}'>0$ only depending on $B$ and $K$ such that if $\|M\| \leq K n^{1/2}$,
		$$
		\P(LCD_{\alpha, \gamma}(v) < \exp(c_{\ref{randomnormal}}n)) \leq \exp(-c_{\ref{randomnormal}}' n)
		$$
		for $\alpha = \beta n^{1/2}$ for any constant $\beta < \lambda$.
	\end{lemma}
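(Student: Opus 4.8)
The plan is an $\varepsilon$-net argument over the level sets $S_D$ of the LCD, using Lemma \ref{singlevector} for each net point and Lemma \ref{net} to keep the nets small. Two exceptional events are removed first. By Lemma \ref{lemma:operatornorm}, the hypothesis $\|M\|\le Kn^{1/2}$ of the lemma, and the triangle inequality, with probability at least $1-2e^{-n}$ we have $\|M'_n\|\le K_1 n^{1/2}$ with $K_1:=K+K'$. By the compressible–vector estimate (the proof of Lemma \ref{Compressible} applies verbatim to the $(n-1)\times n$ matrix $M'_n$), with probability at least $1-e^{-c_0 n}$ there is no $w\in Comp(\delta,\rho)$ with $M'_n w=0$; since the normal vector $v$ satisfies $M'_n v=0$, on this event $v\in Incomp(\delta,\rho)$ and hence $LCD_{\alpha,\gamma}(v)\ge \lambda n^{1/2}$ by Lemma \ref{lowerboundLCD}. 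Let $\mathcal E$ be the intersection of these two events, so $\P(\mathcal E^c)\le e^{-c_0n}+2e^{-n}$.

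Fix the free parameter $\beta$ so small that $\beta<\lambda/4$ and $40\,C_{\ref{singlevector}}K_1\beta\le e^{-a}$ for some fixed $a>0$, put $\alpha:=\beta n^{1/2}$, and choose $c_{\ref{randomnormal}}>0$ with $c_{\ref{randomnormal}}<\min(a,\beta^2)/3$. On $\mathcal E$, if $LCD_{\alpha,\gamma}(v)<e^{c_{\ref{randomnormal}}n}$ then $v\in S_D$ for one of the $O(n)$ dyadic scales $D\in\{2^k\lambda n^{1/2}\}$ with $D\le e^{c_{\ref{randomnormal}}n}$. Fix such a $D$ and let $\mathcal N_D$ be the $2\alpha/D$-net of $S_D$ from Lemma \ref{net}, so $|\mathcal N_D|\le C_{\ref{net}}\frac{(2\alpha+2D)^2}{\alpha^2}\big(10D/n^{1/2}\big)^{2n}$. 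Given $v\in S_D$ pick $v'\in\mathcal N_D$ with $\|v-v'\|_2\le 2\alpha/D$; since $\beta<\lambda/4$ and $D\ge\lambda n^{1/2}$ one gets $\|v'\|_2\in[\tfrac12,3]$, so $\hat w:=v'/\|v'\|_2$ is a unit vector and, using $M'_n v=0$ and $\|M'_n\|\le K_1n^{1/2}$ on $\mathcal E$,
\begin{equation*}
\|M'_n\hat w\|_2=\frac{\|M'_n(v'-v)\|_2}{\|v'\|_2}\le 2\|M'_n\|\,\|v-v'\|_2\le \frac{4K_1\beta n}{D}.
\end{equation*}
Therefore $\P\big(\mathcal E,\ v\in S_D\big)\le \sum_{v'\in\mathcal N_D}\P\big(\|M'_n\hat w\|_2\le 4K_1\beta n/D\big)$.

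Now apply Lemma \ref{singlevector} to the fixed unit vector $\hat w$ with $m=n-1$ and $\varepsilon:=4K_1\beta n/(D(n-1)^{1/2})$. The hypothesis $\varepsilon\ge e^{-\alpha^2}$ holds because $D\le e^{c_{\ref{randomnormal}}n}$ and $c_{\ref{randomnormal}}<\beta^2$; the hypothesis $\varepsilon\ge 4/LCD_{\alpha,\gamma}(\hat w)$ holds because $\hat w$ is a unit vector within $2\alpha/D$ of $S_D$, so its LCD is comparable to $D$ — here one works with a slightly reduced value of $\gamma$ to absorb this $O(\alpha/D)$ perturbation, which changes nothing else — while $\varepsilon\asymp K_1\beta n^{1/2}/D\gg 1/D$. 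Hence each summand is at most $(C_{\ref{singlevector}}\varepsilon)^{2(n-1)}$. Multiplying by $|\mathcal N_D|$, all powers of $D$ and $n^{1/2}$ combine into $\big(10\,C_{\ref{singlevector}}\varepsilon\, D/n^{1/2}\big)^{2n}=\big(40\,C_{\ref{singlevector}}K_1\beta+o(1)\big)^{2n}\le e^{-an}$ for $n$ large, times a factor that is polynomial in $D$ and hence at most $e^{O(c_{\ref{randomnormal}}n)}$. Since $c_{\ref{randomnormal}}<a/3$ this gives $\P\big(\mathcal E,\ v\in S_D\big)\le e^{-an/2}$ for $n$ large; summing over the $O(n)$ scales $D$ and adding $\P(\mathcal E^c)$ yields the claimed bound $e^{-c_{\ref{randomnormal}}'n}$ for a suitable $c_{\ref{randomnormal}}'>0$.

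The crux is the balancing in the union bound: when $D$ is exponentially large the net $\mathcal N_D$ has size $e^{\Theta(n^2)}$, so the per-point probability from Lemma \ref{singlevector} must be $e^{-\Theta(n^2)}$, and the two cancel only because the natural scale $\varepsilon\asymp n^{1/2}/D$ makes $\big(10C_{\ref{singlevector}}\varepsilon D/n^{1/2}\big)^{2n}=(O(\beta))^{2n}$, which decays solely by virtue of choosing $\beta$ small relative to $C_{\ref{singlevector}}$ and $K_1$. The secondary subtlety is checking that the net points, which lie near but not on $S_D$, still have LCD comparable to $D$ so that Lemma \ref{singlevector} applies with this $\varepsilon$; this is the one place the flexibility in $\gamma$ is needed.
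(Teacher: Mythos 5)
Your proposal is essentially the same as the paper's proof: the same split into compressible and incompressible events (via Lemma \ref{Compressible} and Lemma \ref{lemma:operatornorm}), the same decomposition into dyadic level sets $S_D$ with $\lambda n^{1/2}\le D\le e^{c_{\ref{randomnormal}}n}$ (via Lemma \ref{lowerboundLCD}), the same net from Lemma \ref{net}, and the same union bound via Lemma \ref{singlevector} with $\eps\asymp\beta n^{1/2}/D$ so that the $(10D/n^{1/2})^{2n}$ factor in $|\mathcal N_D|$ cancels against $(C_{\ref{singlevector}}\eps)^{2(n-1)}$. You are a bit more careful than the paper at two points --- normalizing the net point $v'$ to a unit vector $\hat w$ before invoking Lemma \ref{singlevector} (which nominally requires $v\in\mathcal S_{\C}^{n-1}$, whereas the paper's net points are not unit vectors), and flagging that $\eps\ge 4/LCD_{\alpha,\gamma}(\hat w)$ must be checked (the paper's condition (\ref{epscondition}) only addresses the $\eps\ge e^{-\alpha^2}$ half of (\ref{eq_epscondition})) --- though your justification for the latter via a slight reduction of $\gamma$ is sketchy; the cleaner route is to use that the net construction yields $[\hat w]^T(\|v'\|_2\theta')\in\mathbb Z^{2n}$ exactly with $\|v'\|_2\|\theta'\|_2=O(D)$. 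These are refinements of the same proof, not a different route.
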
   
	\begin{proof}
		\begin{align*}
		\P(\exists v \in \mathcal{S}_\C^{n-1}, LCD_{\alpha, \gamma}(v) < e^{c n} \text{ and } M'_n v = 0) &\leq \P(\exists v \in Comp, M'_n v = 0) \\
		& + \P(\exists v \in Incomp, LCD_{\alpha, \gamma}(v)< e^{cn} \text{ and } M'_n v = 0).
		\end{align*}
		Lemma \ref{Compressible} handles the first summand (with a slight adjustment since we are now considering $n-1 \times n$ matrices), giving an upperbound of $\exp(-c'_{\ref{Compressible}} n)$.  Note that $Z_j^T v = 0$ is equivalent to $[v](\hat{Z}_j)^T  = 0$.  By Lemma \ref{lemma:operatornorm}, there exists a $K'$ only depending on $B$ such that
		$$
		\P(\|M'_n v\| \geq (K+ K') n^{1/2}) \leq 2 e^{-n}.
		$$
		We can now choose a small enough constant $c'_{\ref{randomnormal}}$ so that $$2e^{-n} + \exp(-c'_{\ref{Compressible}}) + \exp(-2 c'_{\ref{randomnormal}} n) < \exp(-c'_{\ref{randomnormal}} n).$$  Thus, to complete our argument, it suffices to show that the event
		$$
		\mathcal{E} := \{\exists v \in S_D: M'_n v = 0 \text{ and } \|M'_n\| < (K+K') \sqrt n\}
		$$
		holds with probability at most $\exp(-2c'_{\ref{randomnormal}}n)$ for $ \lambda n^{1/2} \leq D \leq\exp(c_{\ref{randomnormal}} n)$.  A simple union bound over a logarithmic number of disjoint $S_D$ yields the result.  Therefore, for the remainder of the proof, we focus on demonstrating that the probability for the event $\mathcal{E}$ is small for $S_D$ with a fixed $D$ in the range $[\lambda n^{1/2}, \exp(c_{\ref{randomnormal}} n)]$.   
		
		Assume that the event $\mathcal{E}$ holds.  Choose $c_{\ref{randomnormal}}$ so that 
		\begin{equation}\label{epscondition}
		2(K+K') \beta n^{1/2} \exp(c \beta^2 n) \geq \exp(c_{\ref{randomnormal}} n).
		\end{equation}  
		This condition ensures that condition (\ref{eq_epscondition}) is met for our later application of Lemma \ref{singlevector}.  For $ \lambda n^{1/2} \leq D \leq\exp(c_{\ref{randomnormal}} n)$, let $\mathcal{N}$ be the $2\alpha/D$-net for $S_D$ as provided by Lemma \ref{net}.  Choose $y \in \mathcal{N}$ such that $\|v - y\|_2 < 2 \alpha/D$.  By the triangle inequality,
		$$
		\|M'_n y\|_2 \leq \|M'_n\| \|v-y\|_2  \leq 2(K+K') n^{1/2} \frac{\alpha}{D} = \frac{2(K+K') \beta n}{D}
		$$ 
		recalling that $\alpha = \beta n^{1/2}$.  Set $\eps = 2 (K+K') \beta n^{1/2}/D$.  Finally, applying the union bound, we find
		\begin{align*}
		\P(\mathcal{E}) &\leq \P(\exists y \in \mathcal{N}: \|M'_ny\|_2 \leq \eps \sqrt n) \\
		&\leq |\mathcal{N}| (C_{\ref{singlevector}} \eps)^{2n-2} && \text{by Lemma \ref{singlevector} and inequality \ref{epscondition}}\\
		&\leq C_{\ref{net}} \frac{(4D)^2}{\alpha^2} \left(\frac{10D}{n^{1/2}}\right)^{2n} \left(\frac{2 C_{\ref{singlevector}}(K+K') \beta n^{1/2}}{D}\right)^{2n-2} && \text{by Lemma \ref{net} and $\alpha \leq D$} \\
		&\leq \frac{1600 D^2 C_{\ref{net}} }{ n^2} (20 C_{\ref{singlevector}}(K+K'))^{2n-2} \beta^{2n - 3} \\
		&\leq \exp(-2 c_{\ref{randomnormal}}n)
		\end{align*}
		for a suitably small $\beta$.
	\end{proof}
	At this point, we have all the necessary elements to complete the proof of Theorem \ref{LeastSingular}.
	\subsection{Proof of Theorem \ref{LeastSingular}}
	\begin{proof}
		Recall that 
		$$
		\P(s_n(M_n) \leq \eps n^{-1/2}) \leq \P(\inf_{x \in Comp(\delta, \rho)} \|M_n x\|_2 \leq \eps n^{-1/2}) + \P(\inf_{x \in Incomp(\delta, \rho)} \|M_n x\|_2 \leq \eps n^{-1/2}) .
		$$
		By Lemma \ref{Compressible}, the first term on the right is exponentially small.  By Lemma \ref{distance}, the second term is upper bounded by
		$$
		 \frac{1}{\nu_1 n} \sum_{j=1}^n \P(\dist(X_j, H_j) < \eps) \leq \frac{1}{\nu_1 n}  \sum_{j=1}^n \mathcal{L}(|X_j \cdot Z_j|, \eps)\}
		$$
		where $Z_j$ is a vector normal to $H_j$.  Let $\mathcal{E}_{D_j}$ be the event that $LCD_{\alpha, \gamma}(Z_j) > \exp(c n)$, then
		$$
		\mathcal{L}(|X_j \cdot Z_j|, \eps) \leq \mathcal{L}(|X_j \cdot Z_j|\big| \mathcal{E}_{D_j}, \eps) + \P(\overline{\mathcal{E}_{D_j}}).
		$$
		By Theorem \ref{smallball}, the first term is less than $C_{\ref{smallball}} \eps^2 + \exp(-c_{\ref{smallball}}n)$ and the second term is less than $\exp(-c_{\ref{randomnormal}}n)$ by Lemma \ref{randomnormal}.
	\end{proof}
	Finally, the proof of Theorem \ref{MainNoReal} is a consequence of Theorem \ref{LeastSingular}.
\section{Proof of Theorem \ref{MainNoReal}}
\begin{proof}
    Let $K'$ be the constant from Lemma \ref{lemma:operatornorm}.  Set $K = 2K'$.  
    We can choose an $\varepsilon n^{-1/2}$ net of the interval $[-K \sqrt n, K \sqrt n]$ of size at most $2Kn / \varepsilon$.  A real eigenvalue, $\lambda$, in the interval $[-K \sqrt n,K \sqrt n]$ would imply that $s_n(N_n - \lambda_0) \leq \eps n^{-1/2}$ for some $\lambda_0$ in the net.  By Theorem \ref{LeastSingular}, this happens with probability at most $C_{\ref{LeastSingular}} \varepsilon^2 + c_{\ref{LeastSingular}}^{n}$.  Thus, by the union bound, the probability that there exists a real eigenvalue is bounded by $(2K n/\varepsilon) (C_{\ref{LeastSingular}}\varepsilon^2 + c_{\ref{LeastSingular}}^{n})$.  Letting $\varepsilon = c_{\ref{MainNoReal}}^{n}$ with $c_{\ref{MainNoReal}} \in (c_{\ref{LeastSingular}},1)$ yields the result after a slight adjustment to $c_{\ref{MainNoReal}}$.  
\end{proof}

\section*{Acknowledgements}
The author would like to thank Van Vu for his support and helpful discussions.  The author also thanks Oanh Nguyen and Flor Orosz Hunziker for their careful reading of the preliminary drafts and many helpful comments.  Finally, the author is grateful for the many suggestions of the anonymous referree, in particular for pointing out that the definition of LCD in this paper appeared previously in \cite{rudelson2009smallest}. 
\bibliographystyle{plain}
\bibliography{LeastSingular}

\end{document}